\newtheorem{theorem}{Theorem}[section]
\newtheorem{corollary}[theorem]{Corollary}
\newtheorem{definition}[theorem]{Definition}
\newtheorem{lemma}[theorem]{Lemma}
\newtheorem{question}[theorem]{Question}
\newtheorem{proposition}[theorem]{Proposition}
\newenvironment{proof}[1][Proof]{\noindent\textbf{#1.} }{\ \rule{0.5em}{0.5em}}
\begin{document}
\title{$\delta$-homogeneity in Finsler geometry and the positive curvature problem\thanks{Supported by NSFC (no. 11271216)}}
\author{Ming Xu$^1$ and Lei Zhang$^2$ \thanks{Corresponding author. E-mail: 546502871@qq.com}\\
\\
$^1$College of Mathematics\\
Tianjin Normal University\\
 Tianjin 300387, P.R. China\\
 \\
$^2$School of Mathematical Sciences\\
Nankai University\\
Tianjin 300071, P.R. China}
\date{}
\maketitle

\begin{abstract}
In this paper, we explore the similarity between normal homogeneity and $\delta$-homogeneity in Finsler geometry. They are both non-negatively curved Finsler spaces. We show that any connected $\delta$-homogeneous Finsler space is $G$-$\delta$-homo-geneous, for some suitably chosen connected quasi-compact $G$. So $\delta$-homogeneous Finsler metrics can be defined by a bi-invariant singular metric on $G$ and submersion, just as normal
homogeneous metrics, using a bi-invariant Finsler metric on $G$ instead. More careful analysis shows, in the space of all Finsler metrics on $G/H$, the subset of all
$G$-$\delta$-homogeneous ones is in fact the closure for the subset of all $G$-normal ones, in the local $C^0$-topology (Theorem \ref{main-thm-1}). Using this approximation technique, the classification work for
positively curved normal homogeneous Finsler spaces can be applied to classify positively curved $\delta$-homogeneous Finsler spaces, which provides the same classification list. As a by-product, this argument
tells more about $\delta$-homogeneous Finsler metrics satisfying the
(FP) condition (a weaker version of positively curved condition).

\textbf{Mathematics Subject Classification (2000)}: 22E46, 53C30.

\textbf{Key words}: Normal homogeneous Finsler space; $\delta$-homogeneous Finsler space; singular norm and metric; Finsler submersion; flag curvature.

\end{abstract}

\section{Introduction}

The concept of $\delta$-homogeneity is introduced by V. N. Berestovskii and C. P. Plaut \cite{BP1999}, and extensively studied
in Riemannian geometry \cite{BN2008} \cite{BN2009} \cite{BN2014}  \cite{BN2011}. Recall that a connected metric space $(M,d)$ is called $G$-$\delta$-homogeneous, where $G$ is a closed connected subgroup of the isometry group $I(M,d)$, if for any point $x,y\in M$,
there exists an isometry $g\in G$ satisfying $g(x)=y$ and the displacement function
$f(\cdot)=d(\cdot,g(\cdot))$ on $M$ reaches its maximum at $x$ (such an isometry $g\in G$ is called a $\delta(x)$-translation).

In \cite{ZD2016}, L. Zhang and S. Deng defined and studied $\delta$-homogeneity in Finsler geometry. They provided several equivalent
descriptions for the $G$-$\delta$-homogeneity. Instead of the isometry $g\in G$ which achieves the transitivity and maximal displacement, we can also use
$\delta(\cdot)$-Killing vector fields or $\delta$-vectors from $\mathfrak{g}=\mathrm{Lie}(G)$ (see \cite{ZD2016} or
Section 2). Their alternative description for a $G$-$\delta$-homogeneous Finsler metric $F$ on $G/H$,
is the foundation for defining the Chebyshev metric $\tilde{F}$, i.e. a bi-invariant singular metric on $G$ induced by $F$ on $M=G/H$. Then $F$ is canonically determined by the Chebyshev metric and the Finsler submersion $\pi:(G,\tilde{F})\rightarrow (G/H,F)$. This alternative description for $\delta$-homogeneity in Finsler geometry is very similar to that for normal homogeneity, which uses smooth bi-invariant Finsler metrics on $G$ instead. Though defining the Chebyshev metric, or any other bi-invariant smooth or singular metric on $G$, will require $G$ to be quasi-compact, i.e. $\mathfrak{g}$ is compact. We prove this is not an essential obstacle, i.e. for any connected $\delta$-homogeneous Finsler space $(M,F)$, we can choose a suitable connected quasi-compact Lie group $G$, such that $(M,F)$ is $G$-$\delta$-homogeneous (see Theorem \ref{quasi-compactness-thm} below).

So both normal homogeneity and $\delta$-homogeneity can be defined by  submersion. This explains the phenomenon that they share many geometric
properties, for example, both have 0 S-curvature and non-negative flag curvature. This similarity in their defining patterns can be formulated as the following approximation theorem,
where we applies the fundamental technique of approximating a singular metric (or a singular norm) by smooth ones in the local $C^0$-topology,

\begin{theorem} \label{main-thm-1}
Assume $G$ is a connected quasi-compact Lie group which acts effectively
on the smooth coset space $G/H$. In the space of all smooth Finsler
metric on $G/H$,
the subset of all $G$-$\delta$-homogeneous ones coincides with the closure for
all $G$-normal homogeneous ones
in the local $C^0$-topology.
\end{theorem}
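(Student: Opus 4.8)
The plan is to prove the two inclusions separately, organising everything around the description of both classes of metrics as Finsler submersion images of bi-invariant norms on $\mathfrak{g}$. Write $F_{Q}(x,v)=\min\{Q(X):X\in\mathfrak{g},\ X^{*}_{x}=v\}$ for the submersion metric on $M=G/H$ induced by a bi-invariant norm $Q$ on $\mathfrak{g}$. By definition a smooth Finsler metric $F$ on $M$ is $G$-normal homogeneous iff $F=F_{Q}$ for some \emph{smooth} strongly convex bi-invariant Minkowski norm $Q$; and, by the alternative description of \cite{ZD2016} recalled in Section~2, $F$ is $G$-$\delta$-homogeneous iff $F=F_{\widetilde{F}}$, where the Chebyshev norm $\widetilde{F}(X)=\sup_{y\in M}F(y,X^{*}_{y})$ is bi-invariant. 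Since $G$ is quasi-compact the closure $K:=\overline{\mathrm{Ad}(G)}\subseteq GL(\mathfrak{g})$ is compact, so every orbit $\mathrm{Ad}(G)X$ is bounded and $\widetilde{F}$ is a genuine (finite) bi-invariant norm, a priori neither smooth nor strongly convex.

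For the inclusion ``$\subseteq$'' I would approximate $\widetilde{F}$ by smooth strongly convex bi-invariant norms. Start from any sequence of smooth strongly convex convex bodies $B_{n}$ with $B\subseteq B_{n}\subseteq(1+\tfrac1n)B$, where $B=\{\widetilde{F}\le1\}$; such $B_{n}$ exist by elementary convex geometry. To recover exact $\mathrm{Ad}(G)$-invariance without spoiling smoothness or strong convexity, average the \emph{support functions}: $\bar h_{n}(\xi):=\int_{K}h_{B_{n}}(k^{*}\xi)\,dk$ is smooth and strongly convex (an average of such), hence the support function of a smooth strongly convex $\mathrm{Ad}(G)$-invariant body $\bar B_{n}$, and since $h_{B}$ is already invariant we still have $B\subseteq\bar B_{n}\subseteq(1+\tfrac1n)B$. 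Its Minkowski functional $Q_{n}$ is thus a smooth strongly convex bi-invariant norm with $\tfrac{1}{1+1/n}\widetilde{F}\le Q_{n}\le\widetilde{F}$. Then $F_{n}:=F_{Q_{n}}$ is $G$-normal homogeneous and a smooth strongly convex Finsler metric (minimising the strongly convex norm $Q_{n}$ over the affine sections $\{X^{*}_{x}=v\}$ gives a smooth strongly convex dependence on $v$, and $\mathrm{Ad}(H)$-invariance of $Q_{n}$ makes this well defined on $T_{o}M$); and, submersion being monotone in $Q$, one gets $\tfrac{1}{1+1/n}F\le F_{n}\le F$, so $F_{n}\to F$ in the local $C^{0}$-topology.

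For ``$\supseteq$'' I would first invoke that $G$-normal homogeneous metrics are $G$-$\delta$-homogeneous (known), so it suffices to show that the class of $G$-$\delta$-homogeneous metrics is $C^{0}$-closed. Let $F_{n}\to F$ locally uniformly with each $F_{n}$ being $G$-$\delta$-homogeneous and $F$ a smooth Finsler metric (note $F$ is $G$-invariant, being a pointwise limit of $G$-invariant metrics). By the $\delta$-vector characterisation it is enough to produce, for each $x\in M$ and $v\in T_{x}M$, some $X\in\mathfrak{g}$ with $X^{*}_{x}=v$ and $F(y,X^{*}_{y})\le F(x,v)$ for all $y$. Pick $\delta(x)$-vectors $X_{n}$ for $F_{n}$ with $X^{*}_{n,x}=v$. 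On a fixed compact neighbourhood $U$ of $x$, uniform convergence gives, for $n$ large, $F(y,X^{*}_{n,y})\le 2F_{n}(y,X^{*}_{n,y})\le 2F_{n}(x,v)$, which is bounded; since $X\mapsto\sup_{y\in U}F(y,X^{*}_{y})$ is a norm on $\mathfrak{g}$ — its kernel is trivial because $X^{*}$ vanishing on the interior of $U$ forces $X^{*}\equiv0$, hence $X=0$ by effectiveness — the $X_{n}$ stay bounded, and any subsequential limit $X$ satisfies $X^{*}_{x}=v$ and, by joint continuity applied to $F_{n}(y,X^{*}_{n,y})\le F_{n}(x,v)$, also $F(y,X^{*}_{y})\le F(x,v)$ for every $y$. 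Hence $F$ is $G$-$\delta$-homogeneous.

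I expect the main technical work to be: (i) confirming that the submersion image of a smooth strongly convex bi-invariant norm is again a smooth strongly convex Finsler metric, i.e.\ the detailed analysis of minimising a strongly convex norm along affine subspaces, which is what guarantees the approximants $F_{n}$ lie in the ambient space; and (ii) the bi-invariant smoothing itself, where the point of averaging support functions rather than norms or bodies is precisely to keep smoothness and strong convexity together with exact $\mathrm{Ad}(G)$-invariance. The remaining ingredients — monotonicity of the submersion, finiteness of $\widetilde{F}$, the $\delta$-vector characterisation, and the compactness/continuity argument in the closedness step — are comparatively routine.
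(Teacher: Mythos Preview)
Your proposal is correct and follows the same overall strategy as the paper: for the inclusion ``$\delta$-homogeneous $\subseteq$ closure of normal homogeneous'', approximate the Chebyshev norm $\widetilde{F}$ on $\mathfrak{g}$ by smooth bi-invariant Minkowski norms and push down by submersion; for the reverse inclusion, show that the class of $G$-$\delta$-homogeneous metrics is $C^{0}$-closed. The execution differs in two places worth noting.

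For the bi-invariant smoothing, the paper (Lemma~\ref{appro-lemma} together with the averaging step in the proof of Theorem~\ref{main-thm-1}) mollifies the singular norm by convolution, repairs positive homogeneity and strong convexity by hand, and then takes an $L^{2}$-average of the squared norm over $G$. Your route---approximate $B=\{\widetilde{F}\le1\}$ from outside by smooth strongly convex bodies and then average their support functions over the compact group $K=\overline{\mathrm{Ad}(G)}$---is a genuine alternative: averaging support functions preserves smoothness and strong convexity directly (each summand has Hessian of rank $\dim\mathfrak{g}-1$ with common radial kernel, so the average does too), and working over $K$ rather than $G$ handles the quasi-compact but non-compact case without further comment. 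Both give the same sandwich $\tfrac{1}{1+1/n}\widetilde{F}\le Q_{n}\le\widetilde{F}$ and hence the same $C^{0}$-convergence downstairs.

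For closedness, the paper argues via Chebyshev norms (Lemma~\ref{lemma-3}(2)): if $F_{n}\to F$ then $\widetilde{F}_{n}\to\widetilde{F}$, and the submersion relation passes to the limit. Your argument is more direct: pick $\delta$-vectors $X_{n}$ for $F_{n}$, use effectiveness of the $G$-action on an open set to see that $X\mapsto\sup_{y\in U}F(y,X^{*}_{y})$ is a genuine norm on $\mathfrak{g}$, bound the $X_{n}$ via the uniform ratio $F_{n}/F\to1$ on the (compact) unit sphere bundle over $U$, and extract a subsequential limit. This is a clean self-contained argument that avoids the Chebyshev machinery entirely.

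The point you flag as (i)---that the submersion image of a smooth strongly convex bi-invariant norm is again a smooth Finsler metric---is exactly the content of the Finsler-submersion theory the paper imports from \cite{PD}, so no additional work is needed there.
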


Here the local $C^0$-topology means, on the fixed manifold $M$, the sequence of singular or smooth Finsler metrics $\{F_n:TM\rightarrow \mathbb{R}\}$ converge to $F:TM\rightarrow \mathbb{R}$, iff they uniformly converge to $F$ when restricted each compact subset of $TM$.

Theorem \ref{main-thm-1} brings a natural question: what geometric properties of $G$-normal homogeneous Finsler metrics can be passed to their local $C^0$-limit metrics, i.e. those $G$-$\delta$-homogeneous ones?
Studying this question in the general context seems to be very
intriguing. But local
$C^0$-convergence is weak for studying most curvature properties, and
there is no convenient comparison theorems in Finsler geometry as in Riemannian geometry.

In this paper, we will use Theorem \ref{main-thm-1}, and the method in \cite{XD2016} to study
the classification of positively curved $\delta$-homogeneous Finsler spaces.
The key observation in \cite{XD2016} is that a flat splitting subalgebra (see \cite{XD2016} or Section 4) provides totally geodesic flat subspace with a dimension bigger than 1 in a normal homogeneous Finsler space. So it is an obstacle for positive flag curvature.
It still works for $\delta$-homogeneous Finsler spaces (see Lemma \ref{lemma-4}), so we have the following theorem.

\begin{theorem} \label{main-thm-2}
Let $G$ be a compact connected Lie group which acts effectively on
the smooth coset space $G/H$. Then $G/H$ admits positively curved $G$-$\delta$-homogeneous
Finsler metrics iff it admits positively curved $G$-normal homogeneous Riemannian metrics.
\end{theorem}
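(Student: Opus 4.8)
The plan is to prove the two implications separately. The ``if'' direction is essentially free, while the ``only if'' direction is where Lemma \ref{lemma-4}, the classification strategy of \cite{XD2016}, and (indirectly) Theorem \ref{main-thm-1} do the work.

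For the ``if'' direction: a positively curved $G$-normal homogeneous Riemannian metric on $G/H$ is in particular a $G$-normal homogeneous Finsler metric, hence a $G$-$\delta$-homogeneous Finsler metric, since normal homogeneity is a special case of $\delta$-homogeneity with the same underlying metric; its flag curvature, which for a Riemannian metric is just the sectional curvature, is still positive. So nothing further is needed.

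For the ``only if'' direction, let $F$ be a positively curved $G$-$\delta$-homogeneous Finsler metric on $G/H$. The first step is to extract a Lie-algebraic obstruction: by Lemma \ref{lemma-4}, a flat splitting subalgebra of the pair $(\mathfrak{g},\mathfrak{h})$ would give a totally geodesic flat subspace of dimension bigger than $1$ inside $(G/H,F)$, which is incompatible with positive flag curvature; hence $(\mathfrak{g},\mathfrak{h})$ has no flat splitting subalgebra. This condition concerns only the compact pair $(\mathfrak{g},\mathfrak{h})$ and does not involve the metric, and it is precisely the condition around which the classification in \cite{XD2016} is organized. The second step is to run that classification argument verbatim, under the same standing hypotheses (effectiveness of the $G$-action and compactness of $G$): the absence of a flat splitting subalgebra forces $G/H$, together with its $G$-action, to lie on the same finite list that occurs in \cite{XD2016} for positively curved $G$-normal homogeneous Finsler spaces, which is exactly the classical list of positively curved $G$-normal homogeneous Riemannian spaces. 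The third step is to observe that every entry of that list carries a $G$-normal homogeneous Riemannian metric of strictly positive sectional curvature, built in the classical way from an $\mathrm{Ad}(G)$-invariant inner product on $\mathfrak{g}$; such a metric is a positively curved $G$-normal homogeneous Riemannian metric on $G/H$, as required.

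The main obstacle is the second step: one must verify that the method of \cite{XD2016}, written there for $G$-normal homogeneous Finsler metrics, transfers word for word to the $\delta$-homogeneous setting. It does because positive curvature enters that argument only through the flat totally geodesic obstruction, and Lemma \ref{lemma-4} makes exactly this obstruction available for $\delta$-homogeneous metrics; once a flat splitting subalgebra is ruled out, the remaining analysis is a metric-independent statement about the abstract pair $(\mathfrak{g},\mathfrak{h})$. It is worth stressing what does \emph{not} work: one cannot prove the ``only if'' direction by approximating $F$ by $G$-normal homogeneous metrics $F_n \to F$ via Theorem \ref{main-thm-1} and transferring positive curvature to the $F_n$, since local $C^0$-convergence does not control flag curvature — this is why the detour through the Lie-algebraic obstruction is essential (Theorem \ref{main-thm-1} and the submersion picture it yields instead underpin Lemma \ref{lemma-4} and the framework of Sections 2 and 4). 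A final routine point is the global bookkeeping: passing between $H$ and its identity component, preserving effectiveness of the $G$-action, and checking that each listed coset space admits the required metric for this particular $G$ rather than only for some larger transitive group.
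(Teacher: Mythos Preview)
Your proof has a genuine gap in the ``only if'' direction. You argue that Lemma \ref{lemma-4} rules out a flat splitting subalgebra for the pair $(\mathfrak{g},\mathfrak{h})$, and then claim that the absence of such an FSS is ``precisely the condition around which the classification in \cite{XD2016} is organized.'' That is not quite right. As the paper summarizes just before its proof of Theorem \ref{main-thm-2}, what \cite{XD2016} actually establishes is: if $G/H$ does \emph{not} admit a positively curved $G$-normal homogeneous Riemannian metric, then one can find a closed subgroup $K$ with $H\subset K\subset G$ such that there is an FSS for $G/K$ --- and in the odd-dimensional case one may genuinely need $K\neq H$ (only in the even-dimensional case can one always take $K=H$). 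So ruling out an FSS for $(\mathfrak{g},\mathfrak{h})$ alone does not force $G/H$ onto the list.

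The paper's proof fills this gap with a submersion step you have omitted: starting from the positively curved $G$-$\delta$-homogeneous metric on $G/H$, Proposition \ref{prop-2} produces a $G$-$\delta$-homogeneous metric on $G/K$, and the flag-curvature inequality for Finsler submersions (Theorem \ref{theorem-3-2}) guarantees it is still positively curved. Only then does Lemma \ref{lemma-4}, applied to the FSS of $G/K$, yield the contradiction. In particular, your assertion that ``positive curvature enters that argument only through the flat totally geodesic obstruction'' is not accurate --- it also enters through the submersion inequality needed to push positive curvature down to $G/K$. Once you insert this descent step, the rest of your outline (including the ``if'' direction and the remark about why direct $C^0$-approximation cannot work) matches the paper's approach.
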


Notice a positively curved homogeneous Finsler space $(M,F)$ is compact by Bonnet-Myers Theorem, so does its isometry group $I(M,F)$, through which $G$ acts on $G/H$. The classification for positively curved homogeneous
spaces is only up to local isometries. We can "cancel" some local product
factor of $G$ contained in $H$ without changing the metric, and at the same
time with the effectiveness of $G$ satisfied.
So Theorem \ref{main-thm-2} indicates the classification list for positively
curved $\delta$-homogeneous Finsler spaces coincides with that in \cite{XD2016}, or that of M. Berger in \cite{Ber} (plus B. Wilking's space
\cite{Wi}). To be precise, it consists of
\begin{description}
\item{\rm (1)} Rank one compact symmetric spaces
\begin{eqnarray*}
& &S^{n-1}=\mathrm{SO}(n)/\mathrm{SO}(n-1),
\mathbb{C}\mathrm{P}^{n-1}=\mathrm{SU}(n)/\mathrm{SU}(n-1),\\
& &\mathbb{H}\mathrm{P}^{n-1}=\mathrm{Sp}(n)/\mathrm{Sp}(n-1)\mathrm{Sp}(1), \mbox{ and }
\mathbb{F}_4/\mathrm{Spin}(9).
\end{eqnarray*}
\item{\rm (2)} Other homogeneous spheres and complex projective spaces
\begin{eqnarray*}
& &S^{2n-1}=\mathrm{SU}(n)/\mathrm{SU}(n-1)=\mathrm{U}(n)/\mathrm{U}(n-1),\\
& &S^{4n-1}=\mathrm{Sp}(n)/\mathrm{Sp}(n-1)=\mathrm{Sp}(n)\mathrm{U}(1)
/\mathrm{Sp}(n-1)\mathrm{U}(1)\\
& &\phantom{S^{4n-1}}
=\mathrm{Sp}(n)\mathrm{Sp}(1)/\mathrm{Sp}(n-1)\mathrm{Sp}(1),\\
& &S^6=\mathrm{G}_2/\mathrm{SU}(3), S^7=\mathrm{Spin}(7)/\mathrm{G}_2,\\
& &S^{15}=\mathrm{Spin}(9)/\mathrm{Spin}(7), \mbox{ and }
\mathbb{C}\mathrm{P}^{2n-1}=\mathrm{Sp}(n)/\mathrm{Sp}(n-1)\mathrm{U}(1).
\end{eqnarray*}
\item{\rm (3)} Berger's spaces $\mathrm{SU}(5)/\mathrm{Sp}(2)S^1$ and $\mathrm{Sp}(2)/\mathrm{SU}(2)$.
\item{\rm (4)} Wilking's space $S^{1,1}=\mathrm{SU}(3)\times\mathrm{SO}(3)/\mathrm{U}(2)$.
\end{description}

The flat splitting subalgebra and the totally geodesic technique can tell more
when we study $\delta$-homogeneous Finsler spaces satisfying the (FP) condition (a weaker version of positive flag curvature condition, which is also called the flag-wise positively curved condition; see \cite{XD2016-2} or Section 4). We prove a flag-wise positively curved $\delta$-homogeneous Finsler space $G/H$ must be compact, and satisfy the rank inequality as positively curved homogeneous spaces \cite{XDHH2016}. When $\dim G/H$ is even,
it admits flag-wise positively curved $\delta$-homogeneous (or normal homogeneous) Finsler metrics iff it admits positively curved $\delta$-homogeneous (or normal homogeneous) Finsler metrics. But when $\dim G/H$ is odd, there are many more examples of flag-wise positively curved $\delta$-homogeneous Finsler spaces than those positively curved ones.

\section{Preliminaries}

In this section, we collect some fundamental knowledge on Finsler geometry from \cite{BCS} and \cite{Sh2001}, and Finsler submersion from \cite{PD}.

\subsection{Minkowski norm and Finsler metric}
A {\it Minkowski norm} on a real vector space $\mathbf{V}$, $\dim \mathbf{V}=n$,
is a continuous function $F:\mathbf{V}\rightarrow [0,+\infty)$ satisfying the
following conditions:
\begin{description}
\item{\rm (1)} Positiveness and smoothness: $F$ is a positive smooth function on $\mathbf{V}\backslash \{0\}$.
\item{\rm (2)} Positively homogeneity of degree one: $F(\lambda y)=\lambda F(y)$ when
$\lambda\geq 0$.
\item{\rm (3)} Strong convexity: given any basis $\{e_1,\ldots,e_n\}$ of
$\mathbf{V}$ and
correspondingly the linear coordinates $y=y^i e_i$,
the Hessian matrix
\begin{equation*}
(g_{ij}(y))=\left(\frac12[F^2(y)]_{y^iy^j}\right)
\end{equation*}
is positive definite whenever $y\neq 0$.
\end{description}
We will call $(\mathbf{V},F)$ a {\it Minkowski space}.

The Hessian matrix defines an inner product
$\langle\cdot,\cdot\rangle_y^F$ on $\mathbf{V}$ by
$$\langle u,v\rangle_y^F=g_{ij}(y)u^i v^j=\frac12
\frac{\mathrm{d}^2}{\mathrm{d}t\mathrm{d}s}
F^2(y+tu+sv)|_{s=t=0},$$
from which we see it
is independent of the choice for the linear basis.

A {\it Finsler metric} on a smooth manifold $M$ is a continuous
function $F$ on $TM$ such that its restriction to the slit tangent bundle
$TM\backslash0$ is smooth, and its restriction to each tangent space
is a Minkowski norm. We will also call $(M,F)$ a {\it Finsler space}. Given any smooth tangent field $Y$ on $M$ which is non-vanishing
everywhere in an open set $\mathcal{U}\subset M$, the Hessian matrices $(g_{ij}(x,Y(x)))$ or equivalently the inner
products $\langle\cdot,\cdot\rangle^F_{Y(x)}$ at each $x\in\mathcal{U}$
define a smooth Riemannian metric on $\mathcal{U}$. We will simply call this metric the {\it localization of $F$ at $Y$}, and denote it as $g_Y^F$.

Important examples of Finsler metrics include
Riemannian metrics, Randers metrics, $(\alpha,\beta)$-metrics, etc.
Riemannian metrics are a special class of Finsler metrics whose
 Hessian matrices at each point only depends on $x\in M$ rather than $y\in T_x M$.
A Riemannian  metric can also be defined as a  global smooth section
$g_{ij}dx^i dx^j$ of $\mathrm{Sym}^2(T^* M)$.
Randers metrics are the most simple and important class of non-Riemannian metrics in Finsler geometry. They
are defined by $F=\alpha+\beta$, in which $\alpha$ is a Riemannian metric and $\beta$ is
a 1-form. They can be naturally generalized as $(\alpha,\beta)$-metrics which have
the form $F=\alpha\phi(\beta/\alpha)$ with a positive smooth function $\phi$ and similar
$\alpha$ and $\beta$ as Randers metrics. In recent years, there have been a lot of research works for $(\alpha,\beta)$-metrics as well as for Randers metrics.

\subsection{Geodesic, Riemannian curvature, and totally geodesic subspace}
On a Finsler space $(M,F)$, we usually choose the {\it standard local coordinates} $(x^i,y^j)$, where $x=(x^i)\in M$
and $y=y^j\partial_{x^j}\in T_x M$, to present the connections, curvatures, and other geometric quantities.

The geodesics are important geometric subjects in Finsler geometry. They are smooth curves which satisfies the local minimizing principle with
respect to the distance function $d_F(\cdot,\cdot)$ induced by the Finsler metric $F$. Notice $d_F(\cdot,\cdot)$ may not be reversible in general.
We will always parametrize a geodesic $c(t)$ on $(M,F)$ to have a nonzero constant speed, i.e. the $F$-length of the tangent field $\dot{c}(t)=\frac{d}{dt}c(t)$ is a positive constant for all $t$. Then the geodesics
can be equivalently defined as following.

First, we have a globally defined smooth vector field $\mathbf{G}$ on $TM\backslash 0$, called the
{\it the geodesic spray}. For any standard
local coordinates, it can be presented as
\begin{equation}
\mathbf{G}=y^i\partial_{x^i}-2\mathbf{G}^i\partial_{y^i},
\end{equation}
where
\begin{equation}
\mathbf{G}^i=\frac{1}{4} g^{il}([F^2]_{x^k y^l}y^k - [F^2]_{x^l}).
\end{equation}
Then a curve $c(t)$ on $M$ is a geodesic of nonzero constant speed if and only if $(c(t),\dot{c}(t))$ is an integration curve
of $\mathbf{G}$. For standard local coordinates, a geodesic $c(t)=(c^i(t))$ satisfies the equations
\begin{equation}
\ddot{c}^i(t)+2 \mathbf{G}^i(c(t),\dot{c}(t))=0.
\end{equation}

The coefficients $\mathbf{G}^i$ of the geodesic spray $\mathbf{G}$ are important for us to present curvatures in Finsler geometry.

For example, we have a similar curvature as in the Riemannian case, which
is called the {\it Riemann curvature}. It can be defined either by the Jacobi field equation for the variation of a constant speed geodesic,
or by the structure equation for the curvature of the Chern connection.

Using any standard local coordinates,
the Riemannian curvature can be presented as
$R^F_y=R^i_k(y)\partial_{x^i}\otimes dx^k:T_x M\rightarrow T_x M$, where
\begin{equation}
R^i_k(y)=2\partial_{x^k}G^i-y^j\partial^2_{x^j y^k}G^i+2G^j\partial^2_{y^j y^k}G^i
-\partial_{y^j}G^i\partial_{y^k}G^j.
\end{equation}

Using the Riemannian curvature, the sectional curvature can be generalized to Finsler
geometry, which is called the {\it flag curvature}. We call  $(x,y,\mathbf{P})$ a {\it flag triple}, if $x$ is a point in $M$, $\mathbf{P}$ is a tangent plane in some $T_xM$, and $y$ is a nonzero vector in $\mathbf{P}$.
Assume $\mathbf{P}$ is linearly spanned by $y$ and $v$, then the flag curvature
for $(x,y,\mathbf{P})$ is
\begin{equation}
K^F(x,y,\mathbf{P})=\frac{\langle R_y v,v\rangle^F_y}
{\langle y,y\rangle^F_y \langle v,v\rangle^F_y-(\langle y,v\rangle^F_y)^2}.
\end{equation}
When $F$ is a Riemannian metric, it is just the sectional curvature for $(x,\mathbf{P})$, which is independent of the choice of $y$.
ure
Using Riemann curvature or flag curvature, the Ricci curvature can also
be generalized to Finsler geometry, i.e. for any $x\in M$ and nonzero tangent
vector $y\in T_xM$,
$$\mathrm{Ric}^F(x,y)=\mathrm{tr}(R^F_y)=\sum_{i=1}^n F(y)^2 K^F(x,y,y\wedge v_i),$$
where $\{v_1,\ldots,v_n\}$ is an orthogonal basis for the $g_y^F$-orthogonal
complement of $y$ in $T_xM$.

Z. Shen has an important observation that all these curvatures described above can
be closely related to Riemannian geometry (see Proposition 6.2.2 in \cite{Sh2001}), so we may call them {\it Riemannian curvatures}.
In this work, we will need the following refinement of his observation.

\begin{proposition}\label{prop-1}
Let $Y$ be a smooth vector field on a Finsler space $(M,F)$, such that
$y=Y(x)\neq 0$, and $Y$ generates a geodesic of constant speed through $x$,
then $R^F_y=R^{g_Y^F}_y$, $\mathrm{Ric}^F(x,y)=\mathrm{Ric}^{g^F_Y}(x,y)$, and for any tangent plane $\mathbf{P}$
in $T_xM$ containing $y$, $K^F(x,y,\mathbf{P})=K^{g^F_Y}(x,\mathbf{P})$.
\end{proposition}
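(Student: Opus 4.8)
The plan is to reduce the Finsler curvature identities to their Riemannian counterparts by exploiting the fact that the geodesic spray coefficients $\mathbf{G}^i$ of $F$ and of the localized Riemannian metric $g_Y^F$ agree along the vector field $Y$. Concretely, I would first recall Shen's observation (Proposition 6.2.2 in \cite{Sh2001}): for a fixed nonzero $y\in T_xM$, if one picks \emph{any} smooth vector field $Y$ with $Y(x)=y$ and forms the localization $\tilde g=g_Y^F$, then the two sprays are related, but in general the comparison of $\mathbf{G}^i_F$ and $\mathbf{G}^i_{\tilde g}$ at $(x,y)$ involves correction terms built from the $x$-derivatives of $Y$, because $g_{ij}^F(x,Y(x))$ varies with $x$ both through its first slot and through $Y$. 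The refinement being claimed here is that these correction terms vanish once $Y$ is not an arbitrary extension of $y$ but one whose integral curve through $x$ is an $F$-geodesic of constant speed.

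The key step is therefore the following spray-matching lemma: under the geodesic hypothesis on $Y$, one has $\mathbf{G}^i_F(x,y)=\mathbf{G}^i_{\tilde g}(x,y)$ \emph{and}, more importantly, the first and second $y$-derivatives and the first $x$-derivatives of the two spray-coefficient functions also agree at $(x,y)$. To see the value match: the integral curve $c(t)$ of $Y$ through $x$ is an $F$-geodesic, so $\ddot c^i+2\mathbf{G}^i_F(c,\dot c)=0$; but this same curve has $\dot c(t)=Y(c(t))$, and since $\tilde g=g_Y^F$ is by construction the Riemannian metric whose fundamental tensor along $Y$ equals that of $F$, the curve $c(t)$ is also a $\tilde g$-geodesic (this is the standard fact that a curve tangent to $Y$ is $F$-geodesic iff it is $g_Y^F$-geodesic — it follows from the formula for $\mathbf{G}^i$ together with the homogeneity relations $[F^2]_{y^l}y^l=2F^2$ and the identity $g_{ij}^F(x,Y(x))Y^j=\tfrac12[F^2]_{y^i}$). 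For the derivative match one differentiates the homogeneity identities and uses that the Cartan tensor contracted with $y$ vanishes: $\partial_{y^k}g_{ij}^F(x,y)\,y^i=0$, so first-order variations of $Y$ near $x$ do not perturb the relevant contractions.

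Granting the spray-matching lemma, the curvature identity $R^F_y=R^{\tilde g}_y$ is immediate by inspecting equation (2.4): $R^i_k(y)$ is a universal polynomial expression in $\mathbf{G}^i$, $\partial_{x^j}\mathbf{G}^i$, $\partial_{y^j}\mathbf{G}^i$, $\partial^2_{x^jy^k}\mathbf{G}^i$, and $\partial^2_{y^jy^k}\mathbf{G}^i$, all evaluated at $(x,y)$ — and each of these has been shown to agree for $F$ and $\tilde g$ at $(x,y)$. (One should note the mild subtlety that $\partial^2_{x^jy^k}\mathbf{G}^i$ for $\tilde g$ vanishes identically in $y$ only in special coordinates, but what matters is its value at $(x,y)$, which is controlled by the same matching.) Then $\mathrm{Ric}^F(x,y)=\mathrm{tr}R^F_y=\mathrm{tr}R^{\tilde g}_y=\mathrm{Ric}^{\tilde g}(x,y)$ follows by taking traces, and for a flag $\mathbf{P}=\mathrm{span}\{y,v\}$ the flag curvature formula (2.5) gives $K^F(x,y,\mathbf{P})$ as a quotient of $\langle R^F_y v,v\rangle^F_y$ by a Gram-type denominator; since $\langle\cdot,\cdot\rangle^F_y=\tilde g_x$ by definition of the localization and $R^F_y=R^{\tilde g}_y$, this is exactly the sectional curvature $K^{\tilde g}(x,\mathbf{P})$, which is independent of $y$ in the plane.

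The main obstacle I anticipate is the careful bookkeeping in the spray-matching lemma: one must track exactly which derivatives of $Y$ enter $\partial_{x^j}\mathbf{G}^i$ and $\partial^2_{x^jy^k}\mathbf{G}^i$ when the Riemannian metric $\tilde g$ has coefficients $g_{ij}^F(x,Y(x))$ depending on $x$ through two slots, and verify that the geodesic condition on $Y$ kills precisely the unwanted terms (those involving $\nabla Y$ that would otherwise spoil the match). This is a slightly delicate but essentially routine computation, already implicit in Shen's Proposition 6.2.2; the novelty of Proposition \ref{prop-1} over that reference is only the observation that the hypothesis "$Y$ generates a geodesic through $x$" is the clean sufficient condition making the localization's curvature at $(x,y)$ agree with the Finsler curvature, which is exactly the form needed in the sequel for geodesic vector fields arising from $\delta$-vectors.
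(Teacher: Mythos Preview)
The paper does not actually prove this proposition: it simply cites Theorem~4.2 of \cite{XDHH2016} for the identities $R^F_y=R^{g_Y^F}_y$ and $K^F(x,y,\mathbf{P})=K^{g_Y^F}(x,\mathbf{P})$, and then remarks that the Ricci statement follows by taking traces. Your proposal therefore goes well beyond what the paper itself offers, supplying a direct computational argument.

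Your overall strategy is correct and is essentially how such results are established, but the spray-matching lemma as you phrase it is not quite right, and your justification undersells the role of the geodesic hypothesis. It is true that $G^i_F(x',Y(x'))=G^i_{\tilde g}(x',Y(x'))$ for all $x'$ (this needs only $C_{ijk}y^i=0$, no geodesic hypothesis), and that the geodesic equation at $x$, in the form $y^j\partial_{x^j}Y^p=-2G^p_F(x,y)$, then forces $\partial_{y^k}G^i_F(x,y)=\tilde\Gamma^i_{jk}(x)y^j=\partial_{y^k}G^i_{\tilde g}(x,y)$; differentiating the first identity in $x^k$ and using this gives $\partial_{x^k}G^i_F(x,y)=\partial_{x^k}G^i_{\tilde g}(x,y)$. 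However, the full second $y$-derivatives $\partial^2_{y^jy^k}G^i$ do \emph{not} agree at $(x,y)$: for $\tilde g$ this equals $\tilde\Gamma^i_{jk}(x)$, which depends on the first derivatives of $Y$, while $\partial_{y^j}N^i_k$ for $F$ does not involve $Y$ at all. What rescues the argument is that formula~(2.4) only requires the combination $-y^j\partial^2_{x^jy^k}G^i+2G^j\partial^2_{y^jy^k}G^i$. Since the geodesic hypothesis holds along the entire integral curve $c(t)$ through $x$, the identity $N^i_k(c(t),\dot c(t))=\tilde\Gamma^i_{lk}(c(t))\dot c^l(t)$ is valid for all $t$; differentiating it in $t$ at $t=0$ and using $\ddot c^l=-2G^l$ yields exactly the needed equality of that combination. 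With this correction your sketch goes through, and the Ricci and flag-curvature identities then follow as you say.
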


The statements for $R^F_y$ and $K^F(x,y,\mathbf{P})$ are contained in Theorem 4.2 in \cite{XDHH2016}. The statement for $\mathrm{Ric}^F(x,y)$ then follows
easily.

A submanifold $N$ of a smooth Finsler space $(M,F)$ can be naturally endowed with a smooth submanifold metric, denoted as $F|_N$. At each $x\in N$, the Minkowski norm $F|_N(x,\cdot)$ is just the restriction of the Minkowski norm $F(x,\cdot)$ from $T_xM$ to $T_xN$. We say that $(N,F|_N)$ is a {\it Finsler
submanifold} or a {\it Finsler subspace}.

For the study of Riemann curvature and flag curvature, the most important
Finsler subspaces are totally geodesic subspaces. A Finsler subspace $(N,F|_N)$ is totally geodesic iff, for any standard local
coordinate system $(x^i,y^j)$ such that $N$ is locally defined by
$x^{k+1}=\cdots=x^n=0$ where $n=\dim M$, we have
$$G^i(x,y)=0,\quad k<i\leq n, y\in T_xN.$$
A direct calculation shows that in this case, the Riemann curvature
$R_y^{F|_N}:T_xN\rightarrow T_xN$ of $(N,F|_N)$ is just the restriction
of the Riemann curvature $R^F_y$ of $(M,F)$, when $y$ is a nonzero
tangent vector in $T_xN$ at $x\in N$. Therefore, we have \cite{XD2016}
\begin{proposition}
Let $(N,F|_N)$ be a totally geodesic submanifold of $(M,F)$. Then for
any flag $(x,y,\mathbf{P})$ in $N$ (i.e. $x\in N$, $y$ is nonzero tangent
vector in $T_xN$, and $\mathbf{P}$ is a tangent plane containing $y$),
we have $$K^{F|_N}(x,y,\mathbf{P})=K^F(x,y,\mathbf{P}).$$
\end{proposition}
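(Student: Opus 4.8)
The plan is to reduce the identity to two facts already assembled in the excerpt: that the fundamental tensor $\langle\cdot,\cdot\rangle^{F|_N}_y$ on $T_xN$ is the restriction of $\langle\cdot,\cdot\rangle^F_y$, and that under the total geodesy hypothesis the Riemann curvature operator $R^{F|_N}_y\colon T_xN\to T_xN$ is the restriction of $R^F_y$ (in particular $R^F_y$ carries $T_xN$ into $T_xN$ whenever $y\in T_xN$). Granting these, the proposition becomes a term-by-term comparison of the two sides of the flag curvature formula.

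First I would fix a flag $(x,y,\mathbf{P})$ lying in $N$ and choose a vector $v\in T_xN$ with $\mathbf{P}=\mathrm{span}_{\mathbb{R}}\{y,v\}$; this is possible precisely because $\mathbf{P}\subseteq T_xN$. From the coordinate-free description $\langle u,w\rangle^F_y=\tfrac12\tfrac{\mathrm{d}^2}{\mathrm{d}t\,\mathrm{d}s}F^2(y+tu+sw)|_{s=t=0}$ one sees that for $y,u,w\in T_xN$ only the values of $F$ on vectors tangent to $N$ enter, so $\langle u,w\rangle^{F|_N}_y=\langle u,w\rangle^F_y$; in particular the three quantities $\langle y,y\rangle_y$, $\langle v,v\rangle_y$, $\langle y,v\rangle_y$ appearing in the denominator of the flag curvature agree whether computed with $F$ or with $F|_N$. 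Using $R^{F|_N}_yv=R^F_yv$ together with the same identification of inner products, the numerator $\langle R^{F|_N}_yv,v\rangle^{F|_N}_y$ equals $\langle R^F_yv,v\rangle^F_y$ as well. Substituting these coincidences into the flag curvature formula then yields $K^{F|_N}(x,y,\mathbf{P})=K^F(x,y,\mathbf{P})$ directly.

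The only real content --- and hence the step I would treat most carefully --- is the identity $R^{F|_N}_y=R^F_y|_{T_xN}$, which the excerpt dispatches as a ``direct calculation.'' I would carry it out in a standard coordinate system $(x^i,y^j)$ adapted to $N$, so that $N=\{x^{k+1}=\cdots=x^n=0\}$ with $n=\dim M$ and $k=\dim N$, and the total geodesy hypothesis reads $G^i(x,y)=0$ for $k<i\le n$ and $y\in T_xN$. One first checks, after choosing the coordinates suitably along $N$, that for $i\le k$ the ambient spray coefficient $G^i$ restricts along $N$ (with $y$ tangent to $N$) to the spray coefficient of $F|_N$; then, feeding this into the coordinate expression for $R^i_k(y)$ in terms of the $G^i$ and their first and second derivatives, one must verify that the terms involving derivatives in the normal directions $x^j,y^j$ with $j>k$ do not contribute when $y\in T_xN$ and both free indices are at most $k$. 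Pinning down exactly which of these mixed tangent/normal derivative terms survive on $N$, and checking that they cancel, is the bookkeeping I expect to be the main obstacle; everything downstream of it is the formal substitution described in the previous paragraph.
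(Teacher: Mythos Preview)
Your proposal is correct and follows exactly the route the paper takes: the paper states (without writing out the computation, and citing \cite{XD2016}) that a ``direct calculation'' in adapted coordinates gives $R^{F|_N}_y = R^F_y|_{T_xN}$, after which the flag curvature identity is immediate from the definition --- precisely the reduction you outline. Your discussion is in fact more detailed than the paper's, which leaves the bookkeeping you flag in the last paragraph entirely to the reader.
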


\subsection{Finsler Submersion}

A linear map $\pi: (\mathbf{V}_1,F_1)\rightarrow (\mathbf{V}_2,F_2)$ between two Minkowski
spaces is called a {\it Finsler submersion}, if we have
\begin{equation}\label{0001}
\pi(\{w\in \mathbf{V}\mbox{ with }F_1(w)\leq 1\})=
\{u\in \mathbf{V}\mbox{ with }F_2(u)\leq 1\}.
\end{equation}
It is obvious that a submersion map
$\pi$ must be surjective, and that the Minkowski norm $F_2$ on $\mathbf{V}_2$ is uniquely determined by the following equality,
$$F_2(u)=\inf\{F_1(w)|\pi(w)=u\}.$$
Given the Minkowski space $(\mathbf{V}_1,F_1)$ and a surjective linear map
$\pi:\mathbf{V}_1\rightarrow\mathbf{V}_2$, there exists a unique Minkowski norm
$F_2$ on $\mathbf{V}_2$ such that $\pi$ is a submersion. We usually say that $F_2$ is induced by $F_1$ and submersion.

Given the submersion $\pi:(\mathbf{V}_1,F_1)\rightarrow
(\mathbf{V}_2,F_2)$, the horizonal lift of a nonzero vector $u\in\mathbf{V}_2$
is the unique $w\in\mathbf{V}_1$ satisfying the conditions
\begin{equation}\label{horizonal-lift}
\pi(w)=u\mbox{ and }F_1(w)=F_2(u).
\end{equation}
Then
$\pi:(\mathbf{V}_1,\langle\cdot,\cdot\rangle_w^{F_1})\rightarrow (\mathbf{V}_2,\langle\cdot,\cdot\rangle_u^{F_2})$ is
also a submersion between Euclidean spaces.


A smooth map $\pi:(M_1,F_1)\rightarrow (M_2,F_2)$ between two Finsler spaces
is called a submersion, if for any $x\in M_1$ the induced tangent map
$\pi_*:(T_x M_1,F_1(x,\cdot))\rightarrow (T_{\rho(x)}M_2,F_2(\rho(x),\cdot))$ is
a Finsler submersion between Minkowski spaces.
Given the surjective
smooth map $\pi:M_1\rightarrow M_2$, and a Finsler metric $F_1$ on $M_1$, if
there exists a metric $F_2$ on $M_2$ which makes $\pi$ a Finsler submersion, we will call $F_2$
the {\it induced metric by $F_1$ and the submersion $\pi$}. The induced metric must be unique, even though it usually does not exist.

For a Finsler submersion $\pi:(M_1,F_1)\rightarrow (M_2,F_2)$, we can define the {\it horizonal lift
for flag triples}. We call the flag triple $(x_1,y_1,\mathbf{P}_1)$ on $M_1$ the horizonal lift of the flag triple
$(x_2,y_2,\mathbf{P}_2)$ on $M_2$, iff $\pi(x_1)=x_2$, $y_2$ is the horizonal
lift of $y_1$, and $\mathbf{P}_1$ is the horizonal lift of $\mathbf{P}_2$
with respect to $\pi_*:(T_{x_1}M_1,\langle\cdot,\cdot\rangle_{y_1}^{F_1})
\rightarrow(T_{x_2}M_2,\langle\cdot,\cdot\rangle_{y_2}^{F_2})$.

The importance of Finsler submersion for the study of flag curvature is implied by the following theorem in \cite{PD}.
\begin{theorem}\label{theorem-3-2}
Let $\pi:(M_1,F_1)\rightarrow (M_2,F_2)$ be a Finsler submersion, and
the flag $(x_1,y_1,\mathbf{P}_1)$ on $M_1$ is the horizonal lift for
the flag $(x_2,y_2,\mathbf{P}_2)$ on $M_2$. Then we have
\begin{equation}
{K}^{F_1}(x_1,y_1,y_1\wedge v_1)\leq
K^{F_2}(x_2,y_2,y_2\wedge v_2).
\end{equation}
\end{theorem}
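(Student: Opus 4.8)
Since the displayed inequality is the Finsler O'Neill estimate of~\cite{PD} quoted just above, one way to establish it is simply to cite that reference; the plan below instead sketches a proof via the localization principle of Proposition~\ref{prop-1}, which fits the circle of ideas used elsewhere in this paper. First I would fix the constant-speed $F_2$-geodesic $c_2(t)$ with $c_2(0)=x_2$, $\dot c_2(0)=y_2$, and extend $y_2$ to a smooth nowhere-vanishing vector field $Y_2$ on a neighborhood $U_2$ of $x_2$ whose integral curve through $x_2$ is $c_2$; for definiteness take the geodesic field of $\exp^{F_2}_{x_2}$ along a small transversal to the direction $y_2$, so that $Y_2$ generates a geodesic through $x_2$. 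Then I would define $Y_1$ on a neighborhood $U_1$ of $x_1$ by letting $Y_1(p)$ be the horizontal lift of $Y_2(\pi(p))$ at $p$, in the sense of~(\ref{horizonal-lift}) with respect to $\pi_*$ and $F_1$. Strong convexity of $F_1$ makes this lift unique and smooth in $p$, so $Y_1$ is a smooth horizontal field with $Y_1(x_1)=y_1$, and unwinding the definitions shows that the integral curve $c_1(t)$ of $Y_1$ through $x_1$ satisfies $\pi\circ c_1=c_2$ and $\dot c_1(t)=$ horizontal lift of $\dot c_2(t)$.

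The key intermediate input is that the horizontal lift of an $F_2$-geodesic along a Finsler submersion is itself an $F_1$-geodesic; this is part of the basic theory of Finsler submersions in~\cite{PD} and in any case can be checked from the first-order characterization of the horizontal lift. Granting it, $c_1$ is a constant-speed $F_1$-geodesic, hence $Y_1$ generates a geodesic through $x_1$, and Proposition~\ref{prop-1} gives $K^{F_1}(x_1,y_1,\mathbf{P}_1)=K^{g_{Y_1}^{F_1}}(x_1,\mathbf{P}_1)$; the same argument applied to $Y_2$ gives $K^{F_2}(x_2,y_2,\mathbf{P}_2)=K^{g_{Y_2}^{F_2}}(x_2,\mathbf{P}_2)$. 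I would then check that $\pi\colon (U_1,g_{Y_1}^{F_1})\to(U_2,g_{Y_2}^{F_2})$ is a Riemannian submersion: for each $p\in U_1$ the vector $Y_1(p)$ is by construction the horizontal lift of $Y_2(\pi(p))$, so the linearized statement recorded just after~(\ref{horizonal-lift}) says precisely that $\pi_*\colon (T_pM_1,\langle\cdot,\cdot\rangle^{F_1}_{Y_1(p)})\to(T_{\pi(p)}M_2,\langle\cdot,\cdot\rangle^{F_2}_{Y_2(\pi(p))})$ is a Euclidean submersion; since these inner products are exactly $g_{Y_1}^{F_1}(p)$ and $g_{Y_2}^{F_2}(\pi(p))$, the Riemannian submersion condition holds, with vertical distribution $\ker\pi_*$ and $g_{Y_1}^{F_1}$-orthogonal horizontal distribution.

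Finally, by the definition of the horizontal lift of a flag triple, $\mathbf{P}_1$ is a $g_{Y_1}^{F_1}$-horizontal plane at $x_1$ mapped isometrically onto $\mathbf{P}_2$ by $\pi_*$, so O'Neill's curvature formula for Riemannian submersions applies and yields $K^{g_{Y_1}^{F_1}}(x_1,\mathbf{P}_1)\le K^{g_{Y_2}^{F_2}}(x_2,\mathbf{P}_2)$, the defect being a fixed positive multiple of the squared norm of the $A$-tensor evaluated on a $g_{Y_1}^{F_1}$-orthonormal basis of $\mathbf{P}_1$, hence $\ge 0$. Chaining this inequality with the two equalities from Proposition~\ref{prop-1} gives $K^{F_1}(x_1,y_1,y_1\wedge v_1)\le K^{F_2}(x_2,y_2,y_2\wedge v_2)$, as claimed.

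The main obstacle is the intermediate geodesic-lifting fact: one has to verify that the Lagrange-multiplier (first-order) description of the horizontal lift is compatible with the geodesic spray $\mathbf{G}$, which is where the analytic structure peculiar to Finsler submersions enters. A secondary, purely technical point is that $Y_1$ and $Y_2$ must be shown smooth on genuine open neighborhoods rather than merely along $c_1$ and $c_2$, so that $g_{Y_1}^{F_1}$ and $g_{Y_2}^{F_2}$ are honest Riemannian metrics and $\pi$ an honest Riemannian submersion between them; this follows from smooth dependence of the horizontal lift and of $\exp^{F}$ on parameters. Everything else reduces either to Proposition~\ref{prop-1} or to the classical O'Neill theory.
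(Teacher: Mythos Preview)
The paper does not prove this theorem at all: it is quoted verbatim from~\cite{PD} with the sentence ``The importance of Finsler submersion for the study of flag curvature is implied by the following theorem in~\cite{PD}'', and no argument is supplied. So there is no ``paper's own proof'' to compare against; your proposal goes well beyond what the paper does.

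That said, your localization strategy is a legitimate and self-contained route to the result, and it meshes nicely with the tools already set up in this paper (Proposition~\ref{prop-1} and the linearized submersion remark after~(\ref{horizonal-lift})). The reduction to a Riemannian submersion between the localized metrics $g_{Y_1}^{F_1}$ and $g_{Y_2}^{F_2}$, followed by the classical O'Neill inequality, is exactly right. You have also correctly isolated the one nontrivial ingredient not proved in this paper: that the horizontal lift of an $F_2$-geodesic is an $F_1$-geodesic. This is indeed established in~\cite{PD}, so citing it there is appropriate; without it the argument would be circular, since that lifting property is essentially equivalent in difficulty to the curvature inequality itself. Your remark about needing $Y_1,Y_2$ defined on open sets (not just along the curves) is also well taken and handled adequately by smooth dependence.
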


\section{$\delta$-homogeneous Finsler metrics}

\subsection{Singular Minkowski norms and singular metrics}

In this work, we also need to consider singular norms and singular metrics. If not specified, Minkowski norms and Finsler metrics are
referred to the smooth ones defined in Subsection 2.1. Notice in some
literatures, for the notions of norms and metrics, people use "continuous" instead of "singular", "regular"
instead of "smooth".

A non-negative continuous function $F$ on a real vector space $\mathbf{V}$ is called a singular norm, if it satisfies the following conditions:
\begin{description}
\item{\rm (1)} Positiveness: $F$ is a positive function on $\mathbf{V}\backslash\{0\}$.
\item{\rm (2)} Positive homogeneity of degree one:
$F(\lambda y)=\lambda F(y)$ when $\lambda\geq 0$.
\item{\rm (3)} Convexity: $F(\lambda y_1+(1-\lambda) y_2)\leq \lambda F(y_1)+(1-\lambda)F(y_2)$ whenever $\lambda\in[0,1]$.
\end{description}
We will also call $(\mathbf{V},F)$ a {\it singular norm space}.

A geometric way to describe the convexity condition for a singular norm $F$ on $\mathbf{V}$ is provided by its indicatrix
$\mathcal{I}_{F,\mathbf{V}}=\{y\in\mathbf{V}|F(y)=1\}$. The indicatrix $\mathcal{I}_{F,\mathbf{V}}$ for a singular norm $F$ on $\mathbf{V}$
is a sphere surrounding the origin $o\in\mathbf{V}$,
which bounds a convex region in $\mathbf{V}$. The indicatrix $\mathcal{I}_{F,\mathbf{V}}$ is a smooth sub-manifold iff
$F$ is smooth on $\mathbf{V}\backslash\{0\}$. Notice even in this situation, $F$ has not been guaranteed to be a Minkowski norm, because we only have the semi
positive definiteness for the Hessian $(g_{ij}(y))=(\frac12[F^2(y)]_{y^iy^j})$, rather than the positive definiteness for the strong convexity condition.

Similarly, a {\it singular metric} $F$ on a smooth manifold $M$ is a continuous function $F$ on $TM$, such that it is positive on the slit tangent bundle $TM\backslash 0$, and its restriction to each tangent space is a singular norm. We will call $(M,F)$ a {\it singular metric space}.
In practice, for the homogeneous singular metrics we will consider in this
work, the smoothness is broken in the tangent direction (i.e. within $T_xM$ for each $x\in M$), but still kept in some sense along the manifold directions.

Geodesics of singular metric spaces can still be defined using locally
minimizing principle. But it is not hard to find examples that there
exists more than one minimizing geodesic from $x$ to $y$ within a neighborhood
$\mathcal{U}$ of $x$, no matter how small $\mathcal{U}$ is. On the other hand,
most curvature concepts in Finsler geometry can not be easily generalized
to the singular situation.

Submersions for singular norm spaces and singular metric spaces
can be similarly defined as in Subsection 2.3. We can still use (\ref{horizonal-lift}) to define the horizonal lift of a tangent vector. But the horizonal lift may not be unique. Further more,
we do not have a flag curvature inequality for Finsler submersion as in Theorem \ref{theorem-3-2}.

\subsection{Definition and Properties}

In \cite{ZD2016}, connected $\delta$-homogeneous Finsler spaces (we only consider connected Lie groups and connected Finsler spaces in this work) are defined as following.

\begin{definition} Let $G$ be a connected Lie group and $(G/H,F)$ a
$G$-homogeneous Finsler space. We call $(G/H,F)$ a $G$-$\delta$-homogeneous Finsler space, if
for any $x,y\in G/H$, there exists an element $g\in G$, such that $g(x)=y$ and
the displacement function $f(\cdot)=d_F(\cdot,g(\cdot))$ of $g$ reaches its maximum at $x$. More generally, a connected Finsler space $(M,F)$ is called
$\delta$-homogeneous if it is $G$-$\delta$-homogeneous for $G=I_0(M,F)$.
\end{definition}

We may always assume $G$ is a closed connected subgroups of the connected isometry group $I_0(G/H,F)$, i.e. $G$ acts effectively on $G/H$.
We apply the following fundamental algebraic setup.
Choose an $\mathrm{Ad}(H)$-invariant decomposition  $\mathfrak{g}=\mathfrak{h}+\mathfrak{m}$,
where $\mathfrak{h}=\mathrm{Lie}(H)$ and $\mathfrak{m}$ can be identified
with the tangent space $T_o(G/H)$ at $o=eH$. We will denote
$\mathrm{pr}_\mathfrak{h}$ and $\mathrm{pr}_\mathfrak{m}$ the corresponding projections to
$\mathfrak{h}$ and $\mathfrak{m}$ respectively.
In this setup, a $G$-homogeneous
Finsler metric $F$ on $G/H$ is one-to-one determined by an
$\mathrm{Ad}(H)$-invariant Minkowski norm on $\mathfrak{m}$, which for simplicity,
will still be denoted as $F$ \cite{D}. This setup is also correct when $F$ is singular.

S. Deng and L. Zhang gave an equivalent description for $\delta$-homogeneous
Finsler spaces in \cite{ZD2016}, i.e. the following theorem,
\begin{theorem}\label{thm-1}
Let $G$ be a connected Lie group and $(G/H,F)$ a
$G$-homogeneous Finsler space.
Then $(G/H,F)$ is $G$-$\delta$-homogeneous iff one of two equivalent
conditions is satisfied:
\begin{description}
\item{\rm (1)}
For any $u\in\mathfrak{m}$, we can find a $\delta$-vector $\tilde{u}\in\mathfrak{g}$ for $u$, i.e. $\mathrm{pr}_\mathfrak{m}(\tilde{u})=u$, and the function
$f(\cdot)=F(\mathrm{pr}_\mathfrak{m}(\mathrm{Ad}(\cdot)\tilde{u}))$
achieves its maximum at $e$.
\item{\rm (2)}
For any $x\in G/H$ and any tangent vector $u\in T_x(G/H)$,  we can find
a $\delta(x)$-Killing vector field $X$ from $\mathfrak{g}$ for the tangent
vector $u$, i.e. $X(x)=u$, and the function $f(\cdot)=F(X(\cdot))$ achieves
its maximum at $x$.
\end{description}
\end{theorem}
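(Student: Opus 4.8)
The plan is to establish the chain ``$G$-$\delta$-homogeneous $\iff$ (1) $\iff$ (2)'', handling the equivalence of (1) and (2) as a formal consequence of $G$-invariance, the implication (1)$\Rightarrow$``$G$-$\delta$-homogeneous'' by a direct construction from a minimizing geodesic, and the reverse implication by a limiting argument, which is the technical core.

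For (1)$\iff$(2): for $\tilde{u}\in\mathfrak{g}$ the induced Killing field is $X_{\tilde{u}}(gH)=\tfrac{d}{dt}|_{t=0}\exp(t\tilde{u})gH$, and from $\exp(t\tilde{u})g=g\exp(t\,\mathrm{Ad}(g^{-1})\tilde{u})$ one gets $X_{\tilde{u}}(gH)=(dL_g)_o\big(\mathrm{pr}_{\mathfrak{m}}(\mathrm{Ad}(g^{-1})\tilde{u})\big)$, hence $F(X_{\tilde{u}}(gH))=F\big(\mathrm{pr}_{\mathfrak{m}}(\mathrm{Ad}(g^{-1})\tilde{u})\big)$ by $G$-invariance of $F$. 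Since $g\mapsto g^{-1}$ is a diffeomorphism of $G$ fixing $e$, the function in (2) attached to $x=o$ attains its maximum at $e$ exactly when the function in (1) does, and $X_{\tilde{u}}(o)=\mathrm{pr}_{\mathfrak{m}}(\tilde{u})$; so (1) is precisely the case $x=o$ of (2). Finally (2) at $o$ gives (2) everywhere, because the push-forward of $X_{\tilde{u}}$ by $g_0\in G$ is $X_{\mathrm{Ad}(g_0)\tilde{u}}$, whose length function is the $g_0$-translate of $F(X_{\tilde{u}}(\cdot))$ and hence attains its maximum at $g_0\cdot o$.

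For (1)$\Rightarrow$``$G$-$\delta$-homogeneous'': given $x,y\in G/H$, take a constant-speed minimizing geodesic $\gamma\colon[0,1]\to G/H$ with $\gamma(0)=x$, $\gamma(1)=y$ (it exists since a homogeneous Finsler space is forward complete), and set $u=\dot{\gamma}(0)$, so $F(u)=d_F(x,y)$. Choose, via (1) transported to $x$, a $\delta$-vector $\tilde{u}$ with $X_{\tilde{u}}(x)=u$ and $F(X_{\tilde{u}}(\cdot))$ maximal at $x$. The integral curve $\sigma\mapsto\exp(\sigma\tilde{u})\cdot x$ has constant speed $F(u)$ (its flow preserves both $F$ and $X_{\tilde{u}}$), and since it passes through a point of maximal length of the Killing field $X_{\tilde{u}}$ it is a geodesic — here I would invoke the Finsler analogue of the Berestovskii--Nikonorov lemma (cf. \cite{ZD2016}). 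Being a constant-speed geodesic issuing from $x$ with velocity $u$, it coincides with $\gamma$, so $g:=\exp(\tilde{u})$ satisfies $g(x)=\gamma(1)=y$; and for every $z$ the curve $\sigma\mapsto\exp(\sigma\tilde{u})z$ joins $z$ to $g(z)$ with speed $F(X_{\tilde{u}}(\exp(\sigma\tilde{u})z))\le F(X_{\tilde{u}}(x))=F(u)$, so $d_F(z,g(z))\le F(u)=d_F(x,g(x))$, i.e. $g$ is a $\delta(x)$-translation.

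For ``$G$-$\delta$-homogeneous''$\Rightarrow$(1) (the main step): fix $0\neq u\in\mathfrak{m}$, let $\gamma(t)=\exp_o(tu)$, minimizing for $t\in[0,\varepsilon]$, and for each such $t$ pick a $\delta(o)$-translation $g_t$ with $g_t(o)=\gamma(t)$; then $d_F(z,g_tz)\le d_F(o,g_to)=tF(u)$ for all $z$. Since the isometric action of $I_0(G/H,F)$ is proper, the constraint $g_t(o)\in\overline{\gamma([0,\varepsilon])}$ keeps the $g_t$ in a fixed compact subset of $G$; any subsequential limit of $g_{t_n}$ as $t_n\to0$ fixes $o$ and has identically zero displacement, hence equals $e$ by effectiveness, so $g_{t_n}\to e$ and, for large $n$, $g_{t_n}=\exp(v_n)$ with $v_n\to0$ in $\mathfrak{g}$. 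The crucial estimate is $\|v_n\|=O(t_n)$: on a fixed compact neighbourhood $K$ of $o$ one has, uniformly in $z\in K$, $d_F(z,\exp(v)z)=F(X_v(z))+O(\|v\|^2)$, while $v\mapsto\sup_{z\in K}F(X_v(z))$ is positive and positively homogeneous on $\mathfrak{g}$ (positive because a Killing field vanishing on an open set vanishes identically, hence corresponds by effectiveness to $0$), so it dominates $c_K\|\cdot\|$ for some $c_K>0$; combining, $c_K\|v_n\|\le t_nF(u)+O(\|v_n\|^2)$, whence $\|v_n\|=O(t_n)$. Passing to a further subsequence, $v_n/t_n\to\tilde{u}\in\mathfrak{g}$; from $\exp(v_n)o=\exp_o(t_nu)$ together with $\|v_n\|^2/t_n\to0$ we get $\mathrm{pr}_{\mathfrak{m}}(\tilde{u})=u$, and from $t_nF(u)\ge d_F(z,\exp(v_n)z)=t_nF(X_{v_n/t_n}(z))+O(\|v_n\|^2)$, dividing by $t_n$ and letting $n\to\infty$ at a fixed $z=gH$, we get $F(X_{\tilde{u}}(gH))\le F(u)$, that is $F(\mathrm{pr}_{\mathfrak{m}}(\mathrm{Ad}(g)\tilde{u}))\le F(u)$ with equality at $g=e$; hence $\tilde{u}$ is a $\delta$-vector for $u$. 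The part demanding the most care is exactly this implication: constructing the $\delta$-translations $g_t$ and controlling their size in $G$ (the properness input and the comparison $c_K\|v\|\le\sup_{z\in K}F(X_v(z))\le C_K\|v\|$), because a component of $v_t$ in $\mathfrak{h}$ is invisible at $o$ itself and must be detected through the displacement at neighbouring points.
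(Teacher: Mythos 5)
The paper does not actually prove Theorem \ref{thm-1}: it is quoted from \cite{ZD2016}, and the only accompanying remark is the one-sentence observation that the $\delta$-vectors of (1) yield, via their $\mathrm{Ad}(G)$-orbits, the $\delta(x)$-Killing fields of (2) for all points and directions --- which is precisely your first step. So there is no in-paper argument to compare against, and your proposal should be judged on its own; on that basis it is essentially sound and follows the expected Berestovskii--Nikonorov-style line. The equivalence (1)$\iff$(2) is a correct bookkeeping of the identity $F(X_{\tilde{u}}(gH))=F(\mathrm{pr}_{\mathfrak{m}}(\mathrm{Ad}(g^{-1})\tilde{u}))$; the implication (1)$\Rightarrow$ $\delta$-homogeneity correctly rests on the fact that the integral curve of a Killing field through a point of maximal length is a geodesic, which is exactly Lemma 3.1 of \cite{DX2014}, the tool the paper itself uses in proving Theorem \ref{non-nega-thm}; and in the converse direction your limiting construction of $\tilde{u}=\lim v_n/t_n$, with the key estimate $\|v_n\|=O(t_n)$ extracted from the displacement bound on a compact neighbourhood, is the right mechanism and is carried out correctly. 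Two points should be made explicit rather than implicit: first, you need $G$ to be closed in $I_0(G/H,F)$ and to act effectively (the paper assumes exactly this just before stating the theorem) --- closedness for the properness argument that confines the $g_{t_n}$ to a compact subset of $G$, and effectiveness both to conclude $g_{t_n}\to e$ and to get positivity of $v\mapsto\sup_{z\in K}F(X_v(z))$ on $\mathfrak{g}\setminus\{0\}$; second, since $F$ is only continuous at the zero section and $X_v$ may vanish on $K$, the expansion $d_F(z,\exp(v)z)=F(X_v(z))+O(\|v\|^2)$ should be justified as a two-sided Lipschitz estimate (using $d_F(z,w)=F(z,\exp_z^{-1}w)$ for nearby points and the Lipschitz continuity of Minkowski norms), after which your chain of inequalities goes through unchanged.
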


The $\delta$-vectors in (1) provides the $\delta(o)$-Killing vectors in (2), and their $\mathrm{Ad}(G)$-orbits provides the $\delta(\cdot)$-Killing vectors for all points and all tangent directions.

Here are some fundamental properties of $\delta$-homogeneous Finsler spaces.

\begin{theorem} \label{non-nega-thm}
A $\delta$-homogeneous Finsler space $(M,F)$ has non-negative
flag curvature.
\end{theorem}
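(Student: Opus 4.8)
The plan is to deduce non-negativity of the flag curvature from a classical property of Killing vector fields, transported to the Riemannian localizations of $F$ via Proposition \ref{prop-1}. Fix an arbitrary flag triple $(x,y,\mathbf{P})$ on $M$, with $F(y)>0$. Since $(M,F)$ is $\delta$-homogeneous it is in particular $G$-homogeneous for $G=I_0(M,F)$, so Theorem \ref{thm-1}(2) applies and produces a $\delta(x)$-Killing vector field $X\in\mathfrak{g}$ with $X(x)=y$ such that $f(\cdot)=F(X(\cdot))$ attains its maximum over $M$ at $x$. Writing $\{\varphi_t\}$ for the (isometric) flow of $X$ and $\gamma(t)=\varphi_t(x)$ for the integral curve of $X$ through $x$, each $\varphi_t$ preserves $F$ and fixes $X$, hence preserves $f$ and carries the maximum point $x$ to $\gamma(t)$; therefore $f\equiv F(y)$ is maximal, in particular critical, along all of $\gamma$, and by the theory of Killing vector fields in homogeneous Finsler spaces (\cite{ZD2016}) this forces $\gamma$ to be a geodesic of constant speed $F(y)$.

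Now set $Y:=X$, a smooth vector field with $Y(x)=y\neq 0$ generating a constant-speed geodesic through $x$, and let $g:=g^F_Y$ be the localization of $F$ at $Y$, a genuine Riemannian metric on the open, $\varphi_t$-invariant set $\mathcal{U}=\{z\in M: X(z)\neq 0\}$, which contains $x$ and all of $\gamma$. Proposition \ref{prop-1} gives $K^F(x,y,\mathbf{P})=K^{g}(x,\mathbf{P})$, so it suffices to show that $g$ has non-negative sectional curvature at $x$ on the planes containing $y$. Two transfer facts reduce this to a purely Riemannian problem. First, $X$ is a Killing field of $g$ on $\mathcal{U}$: each $\varphi_t$ is an $F$-isometry preserving $X$, so its differential intertwines the Hessian of $\tfrac12 F^2$ at $X(z)$ with that at $X(\varphi_t(z))$, i.e. $\varphi_t^{*}g=g$ on $\mathcal{U}$. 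Second, by Euler's homogeneity relation $g(X,X)=F(X)^2$ on $\mathcal{U}$, so $g(X,X)$ again attains its maximum at $x$, with value $F(y)^2>0$.

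It then remains to invoke the standard second-variation computation for a Killing field $X$ on a Riemannian manifold $(\mathcal{U},g)$: if $x$ is a critical point of $g(X,X)$ and $c(s)$ is a $g$-geodesic with $c(0)=x$ and $\dot c(0)=v$ a $g$-unit vector $g$-orthogonal to $X(x)$, then, denoting by $\nabla$ the Levi-Civita connection of $g$,
$$\tfrac{d^2}{ds^2}\Big|_{s=0}\, g\big(X(c(s)),X(c(s))\big) \;=\; 2\,|\nabla_v X|_g^2 \;-\; 2\,K^{g}\big(x,X(x)\wedge v\big)\,|X(x)|_g^2 .$$
Since $x$ is a maximum of $g(X,X)$ the left-hand side is $\le 0$, whence $K^{g}(x,X(x)\wedge v)\,|X(x)|_g^2\ge |\nabla_v X|_g^2\ge 0$, and as $|X(x)|_g^2=F(y)^2>0$ we get $K^{g}(x,X(x)\wedge v)\ge 0$. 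Every plane $\mathbf{P}\ni y$ is of the form $\mathrm{span}\{X(x),v\}$ for such a $v$, so $K^F(x,y,\mathbf{P})=K^{g}(x,\mathbf{P})\ge 0$; since the flag $(x,y,\mathbf{P})$ was arbitrary, $(M,F)$ has non-negative flag curvature.

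The step that demands the most care is the transfer of structure to the localization $g^F_Y$: one must check both that $g^F_Y$ lives on a flow-invariant neighborhood of $x$ and that the $F$-Killing field $X$ remains $g^F_Y$-Killing there, after which the conclusion is just the classical Riemannian Hessian estimate applied along a geodesic. The only other ingredient not contained in the preliminaries above is the implication ``$F(X(\cdot))$ critical along the orbit of $X$ $\Rightarrow$ the orbit is a geodesic'', which rests on the Killing-field machinery for homogeneous Finsler spaces developed in \cite{ZD2016}.
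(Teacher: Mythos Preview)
Your proof is correct and follows essentially the same route as the paper's: obtain a $\delta(x)$-Killing field $X$ with $X(x)=y$, pass to the Riemannian localization $g_X^F$, transfer both the Killing property and the maximum of $F(X(\cdot))^2=g_X^F(X,X)$ to $g_X^F$, use that the integral curve through $x$ is a geodesic so that Proposition~\ref{prop-1} applies, and then invoke the classical Riemannian fact that a Killing field whose squared length is maximal forces non-negative sectional curvature on planes through its value. The only cosmetic differences are that the paper cites Lemma~3.1 of \cite{DX2014} (rather than \cite{ZD2016}) for the ``critical length $\Rightarrow$ geodesic orbit'' step and cites \cite{Ber2} or \cite{Wa} for the final curvature inequality, whereas you write out the second-variation identity explicitly.
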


\begin{proof}
By Theorem \ref{thm-1}, for any $x\in M$ and any nonzero tangent vector $u\in T_xM$, there is a $\delta(x)$-Killing vector field $X$ of $(M,F)$ for the tangent vector $u$.
Denote $g_X^F$ the localization of $F$ at $X$ defined by the Hessian matrices
evaluated with the base vector $X(\cdot)$ at each point. This Riemannian metric is well defined in a neighborhood $\mathcal{U}$ of $x$ where $X$ is non-vanishing
at each point. Because $X$ is a Killing vector field of $(M,F)$, it is
also a Killing vector field of $(\mathcal{U},g_X^F)$. By Lemma 3.1 in
\cite{DX2014}, the integration
curve of $X$ passing $x$ is a geodesic of $(M,F)$. Applying Proposition \ref{prop-1},
we have $K^F(x,u,u\wedge v)=K^{g_X^F}(x,u\wedge v)$, where $v\in T_xM$ is linearly independent of $u$.
Because the length function of $X$ for the metric $F$ coincides
with that for the metric $g_X^F$ inside $\mathcal{U}$, i.e.
$$F(X(\cdot))^2\equiv\langle X(\cdot),X(\cdot)\rangle^F_{X(\cdot)} \mbox{ in }
\mathcal{U}.$$
So the length function of the Killing vector field $X$ for $(\mathcal{U},g_X^F)$ also achieves its maximum
at $x$, by \cite{Ber2} or Lemma 2.2 in \cite{Wa}, we get $K^{g_X^F}(x,u\wedge v)\geq 0$ for any $v\in T_xM$ which is linear independent with $u$. So $K^F(x,u,u\wedge v)\geq 0$, i.e.
$(M,F)$ is non-negatively curved.
\end{proof}

\begin{proposition} \label{prop-2}
Assume $(G/H,F)$ is a $G$-$\delta$-homogeneous Finsler
metric, in which $G$ is a connected Lie group and $H$ is the compact isotropy subgroup at $o=eH\in G/H$.
Then we have the following:
\begin{description}
\item{\rm (1)} Denote $H_0$ the identity component of $H$. Then the metric $F$ naturally induces a $G$-$\delta$-homogeneous
metric on $G/H_0$, still denoted as $F$, such that the canonical projection
$\pi:G/H_0\rightarrow G/H$ is locally isometric.
\item{\rm (2)} For any closed subgroup $K$ of $G$ containing $H_0$, we have a $G$-$\delta$-homogeneous Finsler metric $F'$ on $G/K$
induced by $F$ on $G/H_0$ such that the canonical projection map
$\pi:G/H_0\rightarrow G/K$ is a Finsler  submersion.
\end{description}
\end{proposition}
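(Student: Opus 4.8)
The plan is to work at the infinitesimal level, translating the $\delta$-homogeneity of $(G/H,F)$ through the characterization in Theorem \ref{thm-1}, and then transporting it along the two natural projections. For part (1), the key point is that $\mathfrak{h}=\mathrm{Lie}(H)=\mathrm{Lie}(H_0)$, so the very same $\mathrm{Ad}(H)$-invariant decomposition $\mathfrak{g}=\mathfrak{h}+\mathfrak{m}$ and the very same $\mathrm{Ad}(H)$-invariant Minkowski norm $F$ on $\mathfrak{m}$ define a $G$-invariant Finsler metric on $G/H_0$; the fact that $H/H_0$ is finite (since $H$ is compact) means the canonical projection $\pi\colon G/H_0\to G/H$ is a covering map, and since it intertwines the $G$-actions and the metrics on $\mathfrak{m}$ agree, it is a local isometry. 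The $G$-$\delta$-homogeneity of the new metric is then immediate: the $\delta$-vectors $\tilde u\in\mathfrak{g}$ for $u\in\mathfrak{m}$ supplied by condition (1) of Theorem \ref{thm-1} for $(G/H,F)$ are literally the same vectors, because the function $f(\cdot)=F(\mathrm{pr}_\mathfrak{m}(\mathrm{Ad}(\cdot)\tilde u))$ depends only on $\mathfrak{g}$, $\mathfrak{m}$, and the norm $F$, none of which have changed. Hence condition (1) holds for $(G/H_0,F)$ as well.

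For part (2), let $K$ be a closed subgroup with $H_0\subseteq K\subseteq G$, and let $\mathfrak{k}=\mathrm{Lie}(K)$. First I would fix an $\mathrm{Ad}(K)$-invariant decomposition $\mathfrak{g}=\mathfrak{k}+\mathfrak{m}'$ refining the earlier one, so that $\mathfrak{m}=(\mathfrak{k}\cap\mathfrak{m})\oplus\mathfrak{m}'$ where $\mathfrak{m}'$ is identified with $T_{o'}(G/K)$, $o'=eK$; note $\mathfrak{k}\cap\mathfrak{m}$ is the vertical subspace of $\pi_*$ at $o'$. Define $F'$ on $\mathfrak{m}'$ by the submersion formula of Subsection 2.3, i.e. as the Minkowski norm induced from $F|_\mathfrak{m}$ by the linear projection $\mathrm{pr}_{\mathfrak{m}'}\colon\mathfrak{m}\to\mathfrak{m}'$; equivalently $F'(\bar u)=\inf\{F(u):\mathrm{pr}_{\mathfrak{m}'}(u)=\bar u\}$. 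Since $F$ is $\mathrm{Ad}(K)$-invariant and the decomposition is $\mathrm{Ad}(K)$-invariant, $F'$ is $\mathrm{Ad}(K)$-invariant, hence defines a $G$-invariant (possibly singular) Finsler metric on $G/K$; and by construction $\pi_*$ is a Finsler submersion at $o'$, hence everywhere by $G$-equivariance. The remaining task is to verify condition (1) of Theorem \ref{thm-1} for $(G/K,F')$: given $\bar u\in\mathfrak{m}'$, I would first pick its horizontal lift $u\in\mathfrak{m}$ with respect to the submersion (so $\mathrm{pr}_{\mathfrak{m}'}(u)=\bar u$ and $F(u)=F'(\bar u)$), then take a $\delta$-vector $\tilde u\in\mathfrak{g}$ for $u$ coming from the $\delta$-homogeneity of $(G/H_0,F)$, and claim that this same $\tilde u$ serves as a $\delta$-vector for $\bar u$ on $G/K$ — that is, $\mathrm{pr}_{\mathfrak{m}'}(\tilde u)=\bar u$ and $g\mapsto F'(\mathrm{pr}_{\mathfrak{m}'}(\mathrm{Ad}(g)\tilde u))$ attains its maximum at $e$.

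The identity $\mathrm{pr}_{\mathfrak{m}'}(\tilde u)=\mathrm{pr}_{\mathfrak{m}'}(\mathrm{pr}_{\mathfrak{m}}(\tilde u))=\mathrm{pr}_{\mathfrak{m}'}(u)=\bar u$ is automatic from the nesting of the decompositions. The maximality is where the one genuine inequality enters, and this is the step I expect to be the main obstacle: for every $g\in G$ one has, by the submersion (infimum) definition of $F'$ and the fact that $\mathrm{pr}_{\mathfrak{m}'}(\mathrm{pr}_{\mathfrak{m}}(\mathrm{Ad}(g)\tilde u))=\mathrm{pr}_{\mathfrak{m}'}(\mathrm{Ad}(g)\tilde u)$,
\[
F'(\mathrm{pr}_{\mathfrak{m}'}(\mathrm{Ad}(g)\tilde u))\le F(\mathrm{pr}_{\mathfrak{m}}(\mathrm{Ad}(g)\tilde u))\le F(\mathrm{pr}_{\mathfrak{m}}(\tilde u))=F(u)=F'(\bar u),
\]
where the middle inequality is exactly the maximality of $\tilde u$ as a $\delta$-vector for $u$ on $G/H_0$, and the last equality is the horizontal-lift normalization. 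At $g=e$ the left side equals $F'(\bar u)$, so the maximum is attained at $e$, as required. Thus condition (1) holds for $(G/K,F')$ and it is $G$-$\delta$-homogeneous. One should also remark that since $F$ is an honest (smooth, strongly convex) Finsler metric, the induced $F'$ is at least a singular metric, and is a genuine Finsler metric in the cases used later; the subtlety of possible non-smoothness of submersion images is acknowledged in Subsection 3.1 and does not affect the $\delta$-homogeneity statement. Finally, part (1) is the special case $K=H$ of this construction combined with the covering observation, so the two items fit together cleanly.
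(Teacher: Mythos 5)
Your proposal is correct and follows essentially the same route as the paper's proof: define $F'$ on the tangent space of $G/K$ as the norm induced by the linear Finsler submersion from $(\mathfrak{m},F)$, check $\mathrm{Ad}(K)$-invariance and equivariance to get the submersion globally, and then show that the $\delta$-vector of a horizontal lift descends to a $\delta$-vector on $G/K$ — which is exactly the paper's final step, and your displayed chain of inequalities just makes that step explicit. The only cosmetic differences are a swap of the names $\mathfrak{p}$ and $\mathfrak{m}'$ and the paper's use of the $\mathrm{Ad}(G)$-invariant set $\mathcal{C}$ of $\delta$-vector orbits in place of your direct infimum formula; also note that by the standard fact recalled in Subsection 2.3 the induced norm of a linear submersion of a Minkowski norm is automatically a genuine Minkowski norm, so your hedge that $F'$ might only be singular is unnecessary.
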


\begin{proof} The proof for (1) is very easy.
We only need to prove (2) with $H_0=H$.
We have an $\mathrm{Ad}(K)$-invariant decomposition
$\mathfrak{g}=\mathfrak{k}+\mathfrak{p}$ and $\mathrm{Ad}(H)$-invariant
decomposition $\mathfrak{k}=\mathfrak{h}+\mathfrak{m}'$ such that
$\mathfrak{m}=\mathfrak{m}'+\mathfrak{p}$, with projections
$\mathrm{pr}_\mathfrak{p}$ and $\mathrm{pr}_\mathfrak{m}$ accordingly.

Denote $\mathcal{C}$ the union of all the $\mathrm{Ad}(G)$-orbits of $\delta$-vectors, for all vectors $u$ in $\mathfrak{m}$ with $F(u)\leq 1$.
Then the $G$-$\delta$-homogeneous Finsler metric $F$
can be uniquely determined by
\begin{equation}\label{0004}
\mathrm{pr}_\mathfrak{m}(\mathcal{C})=\{u\in\mathfrak{m}\mbox{ with }F(u)\leq 1\}.
\end{equation}
Let $F'$ be Minkowski norm on $\mathfrak{p}$ induced by
the Minkowski norm $F$ on $\mathfrak{m}$ and the Finsler submersion $\mathrm{pr}_\mathfrak{p}|_\mathfrak{m}:\mathfrak{m}\rightarrow\mathfrak{p}$. Then we also have
\begin{equation}\label{0005}
\mathrm{pr}_\mathfrak{p}(\mathcal{C})=\{u\in\mathfrak{p}\mbox{ with }F'(u)\leq 1\}.
\end{equation}
By similar arguments as Lemma 3.1 in \cite{XD2016}, and all the essential conditions: the $\mathrm{Ad}(G)$-invariance of $\mathcal{C}$, and the similarity among (\ref{0004}), (\ref{0005}) and (\ref{0001}), we see $\mathcal{C}$
defines a $G$-homogeneous Finsler metric $F'$ on $G/K$, and using the translations of $G$, the canonical projection map $\pi:(G/H,F)\rightarrow
(G/K,F')$ is a Finsler submersion. Finally, we can see that for each nonzero $v\in\mathfrak{p}$, it has a horizonal lift $\tilde{v}\in\mathfrak{m}$. Then
the $\delta$-vector for $\tilde{v}$ with respect to $G/H$, is also
a $\delta$-vector for $v$ with respect to $G/K$. Thus 
$(G/K,F')$ is $G$-$\delta$-homogeneous.
\end{proof}

Notice the closed subgroups $H$ and $K$ in Proposition \ref{prop-2} is not required
to be compact, so the $G$-action on $G/K$ may not be effective. But it will
not affect the $\delta$-homogeneity here or later discussions.
\subsection{Quasi-compactness and the submersion construction}

When the connected Lie group $G$ is a quasi-compact Lie group, i.e.
$\mathfrak{g}$ is a compact Lie algebra,
or equivalently, the universal cover of $G$ is a product of compact semisimple Lie group and an Abelian group $\mathbb{R}^k$,
then a $G$-homogeneous Finsler space $(G/H,F)$ is called $G$-normal homogeneous, when $F$ is induced
by a bi-invariant Finsler metric $\bar{F}$ on $G$ such that the projection
$\pi:G\rightarrow G/H$ is a Finsler submersion \cite{XD2016}. It is not hard to see all $G$-normal homogeneous Finsler
metrics are $G$-$\delta$-homogeneous as well.

On the other hand,
consider a $G$-$\delta$-homogeneous Finsler space $(G/H,F)$ with a connected
quasi-compact $G$.
We may choose an $\mathrm{Ad}(G)$-invariant inner product on $\mathfrak{g}$, with respect to
which the decomposition $\mathfrak{g}=\mathfrak{h}+\mathfrak{m}$ is orthogonal. In this context,
we have the following lemma.
\begin{lemma} Assume $(G/H,F)$ is a $G$-$\delta$-homogeneous
Finsler space with the connected quasi-compact Lie group $G$ acting effectively on $G/H$. Then
\begin{equation}\label{0002}
\tilde{F}(w)=\max_{g\in G} F(\mathrm{pr}_\mathfrak{m}(\mathrm{Ad}(g)w))
\end{equation}
defines an $\mathrm{Ad}(G)$-invariant singular norm on $\mathfrak{g}$ and correspondingly a bi-invariant singular metric
on $G$, such that the canonical projection $\pi:(G,\tilde{F})\rightarrow
(G/H,F)$ is a submersion.
\end{lemma}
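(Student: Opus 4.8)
The plan is to verify in turn that the right-hand side of~(\ref{0002}) is well defined, that the resulting $\tilde{F}$ is a singular norm on $\mathfrak{g}$, that it is $\mathrm{Ad}(G)$-invariant (so that it transports to a bi-invariant singular metric on $G$), and finally that the canonical projection becomes a submersion; the $\delta$-homogeneity of $F$ will be used only, but crucially, in the last step. \textbf{Well-definedness and the norm axioms.} Although $G$ itself may be noncompact, for a compact Lie algebra $\mathfrak{g}=\mathfrak{z}(\mathfrak{g})\oplus[\mathfrak{g},\mathfrak{g}]$ the adjoint group $\mathrm{Ad}(G)\subset\mathrm{GL}(\mathfrak{g})$ is compact: it acts trivially on the center $\mathfrak{z}(\mathfrak{g})$, and on $[\mathfrak{g},\mathfrak{g}]$ it is the adjoint group of a compact semisimple Lie group. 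Hence the orbit $\mathrm{Ad}(G)w$ is a fixed compact subset of $\mathfrak{g}$ for each $w$, so $\tilde{F}(w)=\max_{A\in\mathrm{Ad}(G)}F(\mathrm{pr}_\mathfrak{m}(Aw))$ is a maximum of a continuous function over a compact parameter set: it is finite, attained, and continuous in $w$. Positive homogeneity of degree one and the convexity inequality pass from $F$ to $\tilde{F}$ immediately, using linearity of $\mathrm{Ad}(g)$ and $\mathrm{pr}_\mathfrak{m}$, the corresponding properties of $F$, and the subadditivity of $\max$. For positivity on $\mathfrak{g}\setminus\{0\}$ I would argue by contradiction: if $\tilde{F}(w)=0$ for some $w\ne0$, then $\mathrm{Ad}(g)w\in\mathfrak{h}$ for every $g\in G$, so the linear span of the orbit $\mathrm{Ad}(G)w$ is a nonzero $\mathrm{Ad}(G)$-invariant subspace contained in $\mathfrak{h}$, i.e.\ a nonzero ideal of $\mathfrak{g}$ lying in $\mathfrak{h}$; this yields a nontrivial connected normal subgroup of $G$ contained in $H$, contradicting the effectiveness of the $G$-action. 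Thus $\tilde{F}$ is a singular norm on $\mathfrak{g}$.

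\textbf{Invariance, the bi-invariant metric, and the submersion.} From $\mathrm{Ad}(gh)=\mathrm{Ad}(g)\mathrm{Ad}(h)$, reindexing the maximum gives $\tilde{F}(\mathrm{Ad}(h)w)=\tilde{F}(w)$ for all $h\in G$, so $\tilde{F}$ is $\mathrm{Ad}(G)$-invariant; in particular it is $\mathrm{Ad}(H)$-invariant, so left translation of $\tilde{F}$ on $\mathfrak{g}=T_eG$ defines a left-invariant singular metric on $G$, and full $\mathrm{Ad}(G)$-invariance is exactly what makes this metric right-invariant as well, hence bi-invariant (the classical derivation for Riemannian metrics uses only homogeneity and the convexity-type inequality, so it applies verbatim to singular metrics, and left translation keeps the metric continuous on $TG$). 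For the submersion property, by left-invariance of $\tilde{F}$ on $G$ and $G$-invariance of $F$ on $G/H$ it suffices to treat $\pi_*$ at $e$, which under $T_eG=\mathfrak{g}$ and $T_o(G/H)=\mathfrak{m}$ is the map $\mathrm{pr}_\mathfrak{m}$; so we must show $\mathrm{pr}_\mathfrak{m}(\{w:\tilde{F}(w)\le1\})=\{u\in\mathfrak{m}:F(u)\le1\}$. The inclusion ``$\subseteq$'' is immediate from the $g=e$ term in~(\ref{0002}): $F(\mathrm{pr}_\mathfrak{m}(w))\le\tilde{F}(w)\le1$. For ``$\supseteq$'', given $u\in\mathfrak{m}$ with $F(u)\le1$, Theorem~\ref{thm-1}(1) supplies a $\delta$-vector $\tilde{u}\in\mathfrak{g}$ for $u$, so $\mathrm{pr}_\mathfrak{m}(\tilde{u})=u$ and $g\mapsto F(\mathrm{pr}_\mathfrak{m}(\mathrm{Ad}(g)\tilde{u}))$ attains its maximum at $e$; but that maximum is precisely $\tilde{F}(\tilde{u})$, so $\tilde{F}(\tilde{u})=F(u)\le1$ and $\tilde{u}$ is the desired preimage. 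Hence $\pi$ is a submersion.

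\textbf{Main obstacle.} The only genuinely non-formal points are the compactness argument that secures attainment of the maximum in~(\ref{0002}) when $G$ is merely quasi-compact, and the appeal to effectiveness to obtain positivity of $\tilde{F}$; everything else is bookkeeping. The submersion step is conceptually the heart of the statement but technically short, once one observes that a $\delta$-vector $\tilde{u}$ for $u$ is exactly an $\tilde{F}$-horizontal lift of $u$, because ``the maximum of $F(\mathrm{pr}_\mathfrak{m}(\mathrm{Ad}(\cdot)\tilde{u}))$ is attained at $e$'' says precisely that $\tilde{F}(\tilde{u})=F(\mathrm{pr}_\mathfrak{m}(\tilde{u}))$.
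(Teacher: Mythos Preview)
Your proof is correct and follows essentially the same outline as the paper's: well-definedness via compactness of $\mathrm{Ad}(G)$, verification of the singular-norm axioms, $\mathrm{Ad}(G)$-invariance, and then the submersion at $e$ via the $\delta$-vector characterization of Theorem~\ref{thm-1}. The paper is terser (it calls the norm verification ``standard and easy'' and the invariance ``obvious''), whereas you spell out the positivity argument explicitly using effectiveness to rule out a nonzero ideal of $\mathfrak{g}$ contained in $\mathfrak{h}$; this is a welcome addition rather than a different route.
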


\begin{proof}
Firstly we observe $\tilde{F}$ is well defined when $G$ is quasi-compact, because only the compact semisimple factor is relevant for defining $\tilde{F}$. Secondly, we need to prove $\tilde{F}$ satisfies all the
conditions for singular norms. The argument is standard and easy.
Thirdly, it is obvious to see the singular norm $\tilde{F}$ on $\mathfrak{g}$ is $\mathrm{Ad}(G)$-invariant, thus it defines a bi-invariant
singular metric on $G$.
Finally, we prove the canonical projection $\pi:(G,\tilde{F})\rightarrow (G/H,F)$ is a Finsler submersion. The tangent map
$\pi_*:(\mathfrak{g},\tilde{F})\rightarrow (\mathfrak{m},F)$ coincides with
$\mathrm{pr}_\mathfrak{m}$. It is a Finsler submersion by Theorem \ref{thm-1}.
By the bi-invariance of $\tilde{F}$, the $G$-homogeneity of $F$, and
the argument for proving Lemma 3.1 in \cite{XD2016}, the tangent map for the projection
$\pi:(G,\tilde{F})\rightarrow (G/H,F)$ is a submersion everywhere.
\end{proof}

According to \cite{BN2009}, we call singular norm $\tilde{F}$ defined by (\ref{0002})
the {\it Chebyshev norm}, and the corresponding bi-invariant singular metric the {\it Chebyshev metric}. To summarize, when
$G$ is quasi-compact, a $G$-$\delta$-homogeneous Finsler metrics can be
induced by a bi-invariant singular metric and the submersion.
It is easy to check any Finsler metric produced in this process on $G/H$,
whenever it is smooth, must be a $G$-$\delta$-homogeneous Finsler metric.
The horizonal lifts may not be unique, but they provides the $\delta(x)$-Killing vector fields for all $x\in G/H$ and all tangent vectors
in $T_x(G/H)$.
V. N. Berestovskii and
Yu. G. Nikonorov have already made these observations when $G$ is compact
or $G/H$ is compact \cite{BN2009}. Now the problem is

\begin{question}
Do we still have this construction when the connected $\delta$-homogeneous Finsler space $(M,F)$ is not compact?
\end{question}

The answer is positive. We just need to choose a suitable closed connected quasi-compact subgroup $G\subset I_0(M,F)$ which acts transitively and effectively on $M$.

\begin{theorem}\label{quasi-compactness-thm}
Assume $(M,F)$ is a connected $\delta$-homogeneous Finsler space. Let $G$ be the smallest closed connected subgroup of $I_0(M,F)$ which Lie algebra
$\mathfrak{g}$ contains all the $\delta(o)$-Killing vector fields in $\mathrm{Lie}(I_0(M,F))$ for a fixed $o\in M$ and all tangent vectors $u\in T_oM$, then $G$ is quasi-compact
and $(M,F)$ is $G$-$\delta$-homogeneous.
\end{theorem}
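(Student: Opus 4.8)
The plan is to show that $G$, defined as the smallest closed connected subgroup of $I_0(M,F)$ whose Lie algebra $\mathfrak{g}$ contains all $\delta(o)$-Killing vector fields for the fixed basepoint $o$, is automatically quasi-compact, and then to verify that $(M,F)$ is $G$-$\delta$-homogeneous. The first and decisive point is a \emph{uniform norm bound} on the elements of $\mathfrak{g}$. Fix an auxiliary norm (say, an inner product) on $\mathrm{Lie}(I_0(M,F))$ and observe that a $\delta(o)$-Killing field $X$ for a unit tangent vector $u\in T_oM$ must satisfy $F(X(\cdot))\le F(X(o))=F(u)=1$ everywhere on $M$ — this is exactly condition (2) of Theorem~\ref{thm-1}. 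In particular the orbit $\{X(x):x\in M\}$ stays inside the $F$-unit ball of $TM$, and, using homogeneity of $F$ to transport this control, $X$ as a Killing field has globally bounded length. Since the value of a Killing field at a single point determines it (the isometry group acts effectively and $M$ is connected), and since on a homogeneous space the pointwise $F$-length of a Killing field controls its "size" uniformly, we get: the set $\mathcal{D}$ of all $\delta(o)$-Killing fields for unit vectors $u$ is a \emph{bounded} subset of $\mathrm{Lie}(I_0(M,F))$ that is moreover $\mathrm{Ad}(G)$-invariant once we note any $g\in G$ carries a $\delta(o)$-Killing field to a $\delta(g(o))$-Killing field, and these generate $\mathfrak{g}$.

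Next I would invoke the standard Lie-algebraic fact: a real Lie algebra $\mathfrak{g}$ admitting a bounded $\mathrm{Ad}(G)$-invariant subset that generates it (equivalently, admitting an $\mathrm{Ad}$-invariant convex body, equivalently an $\mathrm{Ad}(G)$-invariant inner product after averaging the support function of that body over the closure of $\mathrm{Ad}(G)$ in $GL(\mathfrak{g})$) must be a compact Lie algebra. Concretely: the convex hull $\mathcal{B}$ of $\mathcal{D}\cup(-\mathcal{D})$ is a bounded, centrally symmetric, $\mathrm{Ad}(G)$-invariant convex set whose linear span is all of $\mathfrak{g}$; its Minkowski functional is an $\mathrm{Ad}(G)$-invariant (possibly only singular) norm on $\mathfrak{g}$; one then argues that $\overline{\mathrm{Ad}(G)}\subset GL(\mathfrak{g})$ is compact (it preserves a bounded body with nonempty interior), hence $\mathfrak{g}$ is a compact Lie algebra and $G$ is quasi-compact. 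This is the heart of the argument and, I expect, the main obstacle: one must be careful that $\mathcal{D}$ really spans $\mathfrak{g}$ (true by the definition of $G$), that it is genuinely bounded (this is where the displacement-maximum property, not mere transitivity, is essential — an arbitrary transitive $G$ need not be quasi-compact), and that boundedness of an $\mathrm{Ad}$-invariant generating set forces compactness of $\mathrm{Ad}(G)$'s closure even when the invariant body is not smooth.

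Finally I would check that $G$ acts transitively on $M$ and that $(M,F)$ is $G$-$\delta$-homogeneous. Transitivity: for each $u\in T_oM$ the $\delta(o)$-Killing field for $u$ lies in $\mathfrak{g}$, so $\mathfrak{g}\cdot o = T_oM$, i.e.\ the $G$-orbit of $o$ is open; by homogeneity of $M$ and connectedness, all orbits are open, hence closed, hence $G\cdot o = M$. For $\delta$-homogeneity, apply Theorem~\ref{thm-1}(2): given $x\in M$ and $u\in T_xM$, pick $g\in G$ with $g(o)=x$ (just shown), and let $X_0\in\mathfrak{g}$ be the $\delta(o)$-Killing field for $g_*^{-1}u\in T_oM$, so $F(X_0(\cdot))$ attains its maximum at $o$. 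Then $X=\mathrm{Ad}(g)X_0\in\mathfrak{g}$ (since $\mathfrak{g}$ is $\mathrm{Ad}(G)$-invariant), $X(x)=u$, and $F(X(\cdot))$, being $F(X_0(g^{-1}\cdot))$ up to the isometry $g$, attains its maximum at $x$. Thus $X$ is a $\delta(x)$-Killing field from $\mathfrak{g}$ for $u$, and Theorem~\ref{thm-1} gives $G$-$\delta$-homogeneity. Once $G$ is known to be quasi-compact, this last part is entirely routine; the only subtlety already handled above is the $\mathrm{Ad}(G)$-invariance of the relevant family of Killing fields.
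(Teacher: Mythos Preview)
Your strategy is quite different from the paper's, and more purely algebraic in spirit. The paper does not argue via a bounded $\mathrm{Ad}(G)$-invariant generating set at all. Instead it takes the Levi decomposition $\mathfrak{g}=\mathfrak{g}_1+\mathfrak{g}_2$ and treats the two pieces separately: for the solvable radical $\mathfrak{g}_2$ it exploits that for $v$ in a nilpotent ideal the expansion $\mathrm{pr}_\mathfrak{m}(\mathrm{Ad}(\exp tv)\tilde u)$ is a polynomial in $t$, and boundedness (from the $\delta$-vector property) forces all positive-degree terms to vanish, eventually yielding that $\mathfrak{g}_2$ is central (via Lemma~\ref{lemma-5}); for the semisimple part $\mathfrak{g}_1$ it argues by contradiction through the Iwasawa decomposition, producing a $\delta$-homogeneous left-invariant metric on a nonabelian solvable group $NA$, and then invokes the nonnegative-curvature theorem together with a Ricci curvature estimate (Huang's lemma) to reach a contradiction. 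So the paper's proof is substantially more geometric.

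Your approach has two gaps, one minor and one genuine. The minor one: the assertion that ``the value of a Killing field at a single point determines it'' is false (a Killing field is determined by its $1$-jet, not its value; think of a rotation vanishing at a fixed point). The boundedness of $\mathcal{D}$ is nonetheless true, but needs a different justification: the set $\{X\in\mathrm{Lie}(I_0(M,F)):\sup_{x}F(X(x))\le 1\}$ is closed and contains no ray, hence is bounded in the finite-dimensional space $\mathrm{Lie}(I_0(M,F))$.

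The genuine gap is the claim that the convex hull $\mathcal{B}$ has linear span equal to $\mathfrak{g}$. By construction $G=\overline{G'}$ where $G'$ is the connected subgroup generated by the $\delta(o)$-Killing fields, and $\mathfrak{g}'=\mathrm{Lie}(G')$ may be strictly smaller than $\mathfrak{g}=\mathrm{Lie}(\overline{G'})$ (think of an irrational winding in a central torus). Since $\mathfrak{g}'$ is an ideal of $\mathfrak{g}$ (this is exactly the content of the paper's Lemma~\ref{lemma-5}), the $\mathrm{Ad}(G)$-orbit $\mathcal{D}'=\mathrm{Ad}(G)\mathcal{D}$ stays inside $\mathfrak{g}'$, and one checks $\mathrm{span}(\mathcal{D}')=\mathfrak{g}'$, not $\mathfrak{g}$. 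Your argument therefore only produces an $\mathrm{Ad}(G)$-invariant inner product on $\mathfrak{g}'$, showing $\mathfrak{g}'$ is compact; it does not directly control $\mathrm{Ad}(G)$ on all of $\mathfrak{g}$. This can be repaired: once $\mathfrak{g}'$ is known to be compact, split off its semisimple part $\mathfrak{s}=[\mathfrak{g}',\mathfrak{g}']$ as an ideal of $\mathfrak{g}$, observe that the image of $G'$ in $G/S$ is a dense connected \emph{abelian} subgroup, hence $G/S$ is abelian, so $\mathfrak{g}=\mathfrak{s}\oplus(\text{abelian})$ is compact. But this extra step is essential and is missing from your outline.
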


The group $G$ in Theorem \ref{quasi-compactness-thm} can be constructed as following. First, we fix $o\in M$ and use all the $\delta(o)$-Killing vectors (or all the $\delta$-vectors) in $\mathrm{Lie}(I_0(M,F))$
to generate a subalgebra $\mathfrak{g}'$, which corresponds to a connected subgroup $G'\subset I_0(M,F)$. Then $G$ is the closure of $G'$ in $I_0(M,F)$.
To prove Theorem \ref{quasi-compactness-thm}, we will need the following lemma.

\begin{lemma} \label{lemma-5}
Let $G'\subset I_0(M,F)$ be the connected Lie group generated by all $\delta$-vectors for a $\delta$-homogeneous Finsler space $(M,F)$, and $G$ be the closure of $G'$ in $I_0(M,F)$. Denote their Lie algebras as $\mathfrak{g}'$ and $\mathfrak{g}$ respectively. Then we have the following:
\begin{description}
\item{\rm (1)}If $\mathfrak{g}''\subset\mathfrak{g}$ is a real linear subspace
satisfying $[\mathfrak{g}'',\tilde{u}]\subset\mathfrak{g}''$ for all
$\delta$-vectors $\tilde{u}\in\mathfrak{g}'$, then $\mathfrak{g}''$ is
an ideal of $\mathfrak{g}$.
\item{\rm (2)}If $\mathfrak{g}''\subset\mathfrak{g}$ is a real linear subspace
satisfying $[\mathfrak{g}'',\tilde{u}]=0$ for all
$\delta$-vectors $\tilde{u}\in\mathfrak{g}'$, then $\mathfrak{g}''$ is
contained in the center of $\mathfrak{g}$.
\end{description}
\end{lemma}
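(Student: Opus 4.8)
The plan is to derive both parts from a single algebraic mechanism: the collection of elements of $\mathfrak{g}$ that normalize (resp.\ centralize) the linear subspace $\mathfrak{g}''$ is automatically a Lie subalgebra, so the hypothesis, which only constrains the $\delta$-vectors, upgrades for free to all of $\mathfrak{g}'$; from there one pushes the conclusion from the integral subgroup $G'$ to its closure $G$ by a continuity argument at the group level, and then differentiates.

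For (1), put $\mathfrak{n}=\{X\in\mathfrak{g}:[X,\mathfrak{g}'']\subseteq\mathfrak{g}''\}$. A one-line computation with the Jacobi identity shows $\mathfrak{n}$ is a subalgebra of $\mathfrak{g}$ — and, crucially, this uses nothing about $\mathfrak{g}''$ beyond its being a linear subspace. By hypothesis every $\delta$-vector $\tilde{u}\in\mathfrak{g}'$ lies in $\mathfrak{n}$, and since $\mathfrak{g}'$ is by definition the subalgebra generated by the $\delta$-vectors, $\mathfrak{g}'\subseteq\mathfrak{n}$, i.e.\ $[\mathfrak{g}',\mathfrak{g}'']\subseteq\mathfrak{g}''$. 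Iterating, $(\mathrm{ad}(Y'))^{k}\mathfrak{g}''\subseteq\mathfrak{g}''$ for all $Y'\in\mathfrak{g}'$ and $k\geq 0$; because $\mathfrak{g}''$ is a closed subspace of the finite-dimensional space $\mathfrak{g}$, summing the series for $\mathrm{Ad}(\exp Y')=e^{\mathrm{ad}(Y')}$ gives $\mathrm{Ad}(g)\mathfrak{g}''\subseteq\mathfrak{g}''$, and applying the same to $g^{-1}$, $\mathrm{Ad}(g)\mathfrak{g}''=\mathfrak{g}''$, for every $g$ in the connected group $G'$.

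To pass to $G=\overline{G'}$, fix $g\in G$ and choose $g_{n}\in G'$ with $g_{n}\to g$ in $I_{0}(M,F)$. For each $v\in\mathfrak{g}''$ we have $\mathrm{Ad}(g_{n})v\in\mathfrak{g}''$ and $\mathrm{Ad}(g_{n})v\to\mathrm{Ad}(g)v$ by continuity of the adjoint representation, so $\mathrm{Ad}(g)v\in\mathfrak{g}''$ since $\mathfrak{g}''$ is closed. Hence $\mathrm{Ad}(g)\mathfrak{g}''\subseteq\mathfrak{g}''$ for all $g\in G$; in particular $\mathrm{Ad}(\exp tY)\mathfrak{g}''\subseteq\mathfrak{g}''$ for all $Y\in\mathfrak{g}$ and $t\in\mathbb{R}$, and differentiating at $t=0$ yields $[\mathfrak{g},\mathfrak{g}'']\subseteq\mathfrak{g}''$, which is exactly the statement that $\mathfrak{g}''$ is an ideal of $\mathfrak{g}$.

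Part (2) is the same argument with the normalizer $\mathfrak{n}$ replaced by the centralizer $\mathfrak{z}_{\mathfrak{g}}(\mathfrak{g}'')=\{X\in\mathfrak{g}:[X,\mathfrak{g}'']=0\}$, which is again a subalgebra by the Jacobi identity and again contains all $\delta$-vectors, hence $\mathfrak{g}'$; then $\mathrm{Ad}(\exp Y')v=e^{\mathrm{ad}(Y')}v=v$ for $Y'\in\mathfrak{g}'$ and $v\in\mathfrak{g}''$ because all higher-order terms vanish, so $\mathrm{Ad}(G')$ fixes $\mathfrak{g}''$ pointwise, the continuity argument extends this to $\mathrm{Ad}(G)$, and differentiation gives $[\mathfrak{g},\mathfrak{g}'']=0$, i.e.\ $\mathfrak{g}''\subseteq\mathfrak{z}(\mathfrak{g})$. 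I do not expect a serious obstacle; the only two points that need care are that the normalizer (resp.\ centralizer) of a subspace that need not be a subalgebra is nevertheless a subalgebra, and that the extension from $G'$ to $\overline{G'}$ rests only on continuity of $\mathrm{Ad}$ together with the automatic closedness of the finite-dimensional subspace $\mathfrak{g}''$ — so no structure theory for closures of integral subgroups (such as $\mathfrak{g}'$ being an ideal of $\mathfrak{g}$ with abelian quotient) is actually required, although that would furnish an alternative route.
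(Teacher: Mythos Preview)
Your proof is correct and follows essentially the same route as the paper's: from the hypothesis on $\delta$-vectors one obtains $\mathrm{Ad}(G')\mathfrak{g}''\subset\mathfrak{g}''$, passes to the closure $G$ by continuity of $\mathrm{Ad}$, and differentiates. The paper's argument is terser, leaving implicit the step you spell out via the normalizer subalgebra; your version simply fills in that mechanism.
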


\begin{proof} Because $[\mathfrak{g}'',\tilde{u}]\subset\mathfrak{g}''$ for all $\delta$-vectors $\tilde{u}$ which generate $\mathfrak{g}'$,
we have $\mathrm{Ad}(G')\mathfrak{g}''\subset\mathfrak{g}''$. It is still
valid when we replace $G'$ with its closure $G$ in $I_0(M,F)$, i.e.
$\mathrm{Ad}(G)\mathfrak{g}''\subset\mathfrak{g}''$. So $\mathfrak{g}''$ is
an ideal of $\mathfrak{g}$. The proof for (1) is done.
The proof for (2) is similar.
\end{proof}

{\bf Proof of Theorem \ref{quasi-compactness-thm}.}
Because $\mathfrak{g}=\mathrm{Lie}(G)$ contains all $\delta(o)$-Killing vector fields,
$G$ acts transitively around a neighborhood of $o$, and thus transitively
everywhere. By Theorem \ref{thm-1}, $(M,F)$ is $G$-$\delta$-homogeneous.
Denote $M=G/H$ with an $\mathrm{Ad}(H)$-invariant decomposition
$\mathfrak{g}=\mathfrak{h}+\mathfrak{m}$.

We only need to prove $\mathfrak{g}$ is compact.
We take Levi decomposition $\mathfrak{g}=\mathfrak{g}_1+\mathfrak{g}_2$ where
$\mathfrak{g}_1$ is a semisimple subalgebra, and $\mathfrak{g}_2$ is a solvable
ideal.
If we can prove $\mathfrak{g}_1$ is compact, and $\mathfrak{g}_2$
is the center of $\mathfrak{g}$, then the compactness of $\mathfrak{g}$ is done.


First we prove $\mathfrak{g}_2$ is the center of $\mathfrak{g}$. Consider any $\delta$-vector $\tilde{u}$ in $\mathfrak{g}$ and $v\in[\mathfrak{g}_2,\mathfrak{g}_2]$.
Because $[\mathfrak{g}_2,\mathfrak{g}_2]$ is a nilpotent ideal of $\mathfrak{g}$, the right side of
\begin{eqnarray}\label{0003}
\mathrm{pr}_\mathfrak{m}(\mathrm{Ad}(\exp tv)\tilde{u})
=\mathrm{pr}_\mathfrak{m}(\tilde{u})+t\mathrm{pr}_\mathfrak{m}([v,\tilde{u}])+
\frac{1}{2}t^2\mathrm{pr}_\mathfrak{m}([v,[v,\tilde{u}]])+\cdots,
\end{eqnarray}
is a finite sum. Because $\tilde{u}$ is
a $\delta$-vector, the function $f(t)=F(\mathrm{pr}_\mathfrak{m}(\mathrm{Ad}(\exp tv)\tilde{u}))$ is bounded for $t\in\mathbb{R}$. So the vector coefficient in (\ref{0003}) for each positive power of $t$
must vanish. In particular, $\mathrm{pr}_\mathfrak{m}([v,\tilde{u}])=0$,
i.e. $[v,\tilde{u}]\in\mathfrak{h}$ for any $v\in[\mathfrak{g}_2,\mathfrak{g}_2]$ and $\delta$-vector $\tilde{u}$.
In fact, we have $$[[\mathfrak{g}_2,\mathfrak{g}_2],\tilde{u}]\subset
\mathfrak{h}\cap[\mathfrak{g}_2,\mathfrak{g}_2]$$
because $[\mathfrak{g}_2,\mathfrak{g}_2]$ is an ideal of $\mathfrak{g}$.
By (1) of Lemma \ref{lemma-5}, $\mathfrak{h}\cap[\mathfrak{g}_2,\mathfrak{g}_2]$ is an ideal of $\mathfrak{g}$ contained in $\mathfrak{h}$. It must be 0 because $G\subset I_0(M,F)$ acts effectively on $M$. By (2) of Lemma \ref{lemma-5},
$[\mathfrak{g}_2,\mathfrak{g}_2]$ is contained in the center of $\mathfrak{g}$.

Now we consider (\ref{0003}) for any $v\in\mathfrak{g}_2$ and $\delta$-vector $\tilde{u}$.
Because $[\mathfrak{g}_2,\mathfrak{g}_2]$ is contained in the center of
$\mathfrak{g}$, the right side of (\ref{0003}) is a finite sum. Similar arguments as above proves $\mathfrak{g}_2$ is contained in the center of
$\mathfrak{g}$.
Because $\mathfrak{g}_1$ is semi-simple, this proves $\mathfrak{g}_2$ is the center of $\mathfrak{g}$.

Then we prove $\mathfrak{g}_1$ must be compact. Assume conversely it is not. Let $G_1$ be the semisimple
Lie group corresponding to $\mathfrak{g}_1$. We can find a maximal compact subalgebra $\mathfrak{k}\subset\mathfrak{g}_1$ such that $\mathfrak{h}\subset
\mathfrak{h}'=\mathfrak{k}+\mathfrak{g}_2$. Denote $H'$ the closed subgroup
of $G$ corresponding to $\mathfrak{h}'$. By Proposition \ref{prop-2} and Theorem \ref{non-nega-thm}, $G/H'$ admits a $G$-$\delta$-homogeneous Finsler
metric $F''$, which is a non-negatively curved.
By the Iwasawa decomposition $G_1=NAK$, where $A$ is Abelian and $N$ is nilpotent, the metric $F'$ on $G/H'$ is a left invariant Finsler metric on
the solvable Lie group
$G''=NA$. Because we have assumed that $\mathfrak{g}_1$ is not compact, both $N$ and $A$ has positive dimensions, i.e. $\dim G''\geq 2$.
Denote $\mathfrak{g}''=\mathfrak{a}+\mathfrak{n}$,
then $[\mathfrak{g}'',\mathfrak{g}'']=\mathfrak{n}$. We can find a nonzero vector $u\in\mathfrak{g}''\backslash\mathfrak{n}$, such that $\langle u,\mathfrak{n}\rangle_{u}^{F''}=0$. By Lemma 4.3 in \cite{Huang},
$\mathrm{Ric}^{F''}(u)\leq 0$ with equality only happens when $\mathrm{ad}(u)$ is skew symmetric with respect to $g_u^{F''}$. Because $(G/K,F'')$ has non-negative
flag curvature, so $\mathrm{Ric}^{F''}(u)=0$, and $\mathrm{ad}(u)$ is skew symmetric with respect to $g_u^{F''}$. It implies the nonzero eigenvalues of $\mathrm{ad}u:\mathfrak{g}''\rightarrow\mathfrak{g}''$ are pure imaginary numbers. On the other hand $\mathrm{ad}(u)$ only has real eigenvalues, so $u$
must be a nilpotent element in $\mathfrak{n}$. This is a contradiction to our
choice of $u$ from $\mathfrak{g}''\backslash\mathfrak{n}$.

The proof of Theorem \ref{quasi-compactness-thm} is done.

All above discussions can be summarized as the following theorem.
\begin{theorem}\label{thm-3}
Let $(M,F)$ be a connected $\delta$-homogeneous Finsler metric, either compact or noncompact. Then we can find a closed connected quasi-compact subgroup $G\in I_0(M,F)$ such that $(M,F)$ is $G$-$\delta$-homogeneous, and $F$ can be
induced by the Chebyshev metric on $G$ (i.e. the singular bi-invariant Finsler metric $\tilde{F}$ on $G$ determined by $F$ and (\ref{0002})) and submersion.
\end{theorem}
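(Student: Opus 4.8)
The statement is essentially the synthesis of Theorem~\ref{quasi-compactness-thm} with the lemma producing the Chebyshev norm via (\ref{0002}), so the plan is to assemble these two results after checking that their hypotheses line up; no genuinely new argument should be needed.

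First I would construct the group $G$. Fix a base point $o\in M$ and let $\mathfrak{g}'\subset\mathrm{Lie}(I_0(M,F))$ be the Lie subalgebra generated by all $\delta(o)$-Killing vector fields (equivalently, all $\delta$-vectors) for $o$ and all tangent directions $u\in T_oM$; let $G'$ be the corresponding connected subgroup of $I_0(M,F)$ and let $G$ be the closure of $G'$ in $I_0(M,F)$. By Theorem~\ref{quasi-compactness-thm}, $G$ is quasi-compact and $(M,F)$ is $G$-$\delta$-homogeneous; write $M=G/H$ with $H$ the isotropy subgroup at $o$, and fix an $\mathrm{Ad}(H)$-invariant decomposition $\mathfrak{g}=\mathfrak{h}+\mathfrak{m}$.

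Next I would verify that $G$ meets the hypotheses of the lemma around (\ref{0002}). The group $G$ is connected, being the closure of the connected group $G'$; it is quasi-compact by the previous step; and it acts effectively on $M$ since it is a subgroup of $I_0(M,F)$. Because $G$ is quasi-compact, I may choose an $\mathrm{Ad}(G)$-invariant inner product on $\mathfrak{g}$ with respect to which $\mathfrak{g}=\mathfrak{h}+\mathfrak{m}$ is orthogonal. The lemma then applies verbatim: $\tilde{F}(w)=\max_{g\in G}F(\mathrm{pr}_\mathfrak{m}(\mathrm{Ad}(g)w))$ is a well-defined $\mathrm{Ad}(G)$-invariant singular norm on $\mathfrak{g}$, hence defines a bi-invariant singular metric $\tilde{F}$ on $G$ (the Chebyshev metric), and the canonical projection $\pi:(G,\tilde{F})\rightarrow(G/H,F)$ is a Finsler submersion of singular metric spaces.

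Finally, the assertion that $F$ is induced by $\tilde{F}$ and submersion is just a restatement of this last property, since the metric induced by $\tilde{F}$ and $\pi$, when it exists, is unique, and here it exists and equals $F$ by the submersion relation $F(u)=\inf\{\tilde{F}(w)\mid \pi_*w=u\}$. This completes the argument. I expect no real obstacle in Theorem~\ref{thm-3} itself: the substantive work is upstream — in the noncompact case it is entirely Theorem~\ref{quasi-compactness-thm}, whose hard point is excluding a noncompact semisimple Levi factor of $\mathfrak{g}$ via the totally geodesic flat and Ricci estimates, and the submersion construction is the content of the preceding lemma — so the only thing to watch is that the group produced by Theorem~\ref{quasi-compactness-thm} genuinely carries connectedness, quasi-compactness and effectiveness, which it does, so that the Chebyshev construction runs without change.
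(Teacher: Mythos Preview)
Your proposal is correct and matches the paper's own treatment: the paper presents Theorem~\ref{thm-3} explicitly as a summary of the preceding discussion (``All above discussions can be summarized as the following theorem''), with no separate proof, and your assembly of Theorem~\ref{quasi-compactness-thm} together with the Chebyshev-norm lemma is exactly that synthesis. The hypothesis checks you include (connectedness, quasi-compactness, effectiveness of $G$) are the right ones and are all immediate.
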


\subsection{Normal homogeneity and $\delta$-homogeneity}

Theorem \ref{thm-3} implies all connected $\delta$-homogeneous Finsler metrics, no matter compact or noncompact, can be constructed by the same process as
normal homogeneous Finsler metrics. The only difference is that for
the suitably chosen connected quasi-compact group $G$, we use (smooth) bi-invariant Finsler metrics for $G$-normal homogeneity, but singular bi-invariant
Finsler metrics (i.e. the Chebyshev metrics) for $G$-$\delta$-homogeneity.

Notice for any connected $G$-$\delta$-homogeneous Finsler space $(G/H,F)$ where $G$ is connected and quasi-compact, we may have
many different bi-invariant singular metrics on $G$ which defines the same
$F$, among which the Chebyshev metric $\tilde{F}$ is the smallest one.
In particular, when $(G/H,F)$ is $G$-normal homogeneous, there exists
a smooth one, i.e. a bi-invariant Finsler metric $\bar{F}$, which meets our purpose. But generally speaking, these smooth $\bar{F}$ may not exist, i.e.
$G$-normal-homogeneity and $G$-$\delta$-homogeneity are essentially different.

Here is an example. Let $M$ be coset space $S^3=\mathrm{U}(2)/\mathrm{U}(1)$
where $\mathrm{U}(1)$ corresponds the right down corner. We denote each matrix
$$\left(
   \begin{array}{cc}
     \sqrt{-1}(a+d) & b+\sqrt{-1}c \\
     -b+\sqrt{-1}c & \sqrt{-1}(-a+d) \\
   \end{array}
 \right)\in\mathfrak{u}(2)
$$ as $(a,b,c,d)$. Then $\mathfrak{h}=\mathbb{R}(1,0,0,-1)$ and
$\mathfrak{m}=\{(a,b,c,a)\mbox{ for all }a,b,c\in\mathbb{R}\}$.
We have a family of $\mathrm{U}(2)$-homogeneous Riemannian metrics $F_\epsilon$, defined by
$$F_\epsilon(a,b,c,a)=(4-\epsilon)a^2+b^2+c^2$$ on $\mathfrak{m}$.
When $\epsilon$ is a positive number, sufficiently close to 0,
$F_\epsilon$ is an $\mathrm{U}(2)$-normal homogeneous Riemannian metric.
Their limit when $\epsilon$ approaches 0, i.e. $F_0$, is
$\mathrm{U}(2)$-$\delta$ homogeneous (see Lemma \ref{lemma-3} below).

Now we show $F_0$ can not be induced by a bi-invariant Finsler metric
$\bar{F}$ on $\mathrm{U}(2)$ and submersion.
Assume conversely such a smooth metric exists.
Then the indicatrix $\mathcal{I}_{\bar{F},\mathfrak{g}}$ in $\mathfrak{g}$
must be contained in the region
$$\{(a,b,c,d)|(a+d)^2+b^2+c^2\leq 1\}\subset\mathfrak{g}.$$
Because it is invariant
under right $\mathrm{SU}(2)$-translations, $\bar{F}$ must be an
$(\alpha,\beta)$-metric corresponding to the Lie algebra decomposition
$\mathfrak{u}=\mathfrak{su}(2)\oplus\mathbb{R}$. So $\mathcal{I}_{\bar{F},\mathfrak{g}}$ is contained in the region
$$\{(a,b,c,d)|\sqrt{a^2+b^2+c^2}\leq 1-|d|\}.$$
On the other hand, $\mathcal{I}_{\bar{F},\mathfrak{g}}$ must contain
the round sphere
$$\{(a,b,c,0)|a^2+b^2+c^2=1\}\subset\mathfrak{su}(2),$$
where $\mathcal{I}_{\bar{F},\mathfrak{g}}$ lost its smoothness. This provides
the contradiction.

\subsection{Proof of Theorem \ref{main-thm-1}}

It is a fundamental observation that singular norms and singular metrics can be approximated by smooth ones. For proving Theorem \ref{main-thm-1},
we will only
use the following lemma for singular norms.

\begin{lemma}\label{appro-lemma}
For any singular norm $F$ on an $n$-dimensional real vector space $\mathbf{V}$, we can find a sequence of smooth Minkowski norm $F_n$ on $\mathbf{V}$ which converge to $F$ in the local
$C^0$-topology.
\end{lemma}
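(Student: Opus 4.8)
The plan is to manufacture the approximants as square roots of smooth, convex, degree-$2$ homogeneous functions approximating $F^{2}$ (equivalently, as gauge functions of smooth strictly convex bodies approximating the convex body $\{y\in\mathbf{V}:F(y)\le1\}$), gaining \emph{smoothness} and \emph{strong} convexity in turn. First I would fix an auxiliary Euclidean inner product on $\mathbf{V}$ with norm $\|\cdot\|$; since $F$ is continuous and positive on the compact Euclidean unit sphere $S$, there are constants $0<c\le C$ with $c\|y\|\le F(y)\le C\|y\|$, and because Minkowski norms on a fixed vector space are degree-$1$ homogeneous, local $C^{0}$-convergence is equivalent to uniform convergence on $S$, so it suffices to control the approximants on $S$. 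As a preliminary step, for $\delta\in(0,1)$ set
\begin{equation*}
F_{\delta}:=\sqrt{(1-\delta)F^{2}+\delta\|\cdot\|^{2}} .
\end{equation*}
Since $F^{2}$ (the square of a nonnegative convex function) and $\|\cdot\|^{2}$ are convex, $(1-\delta)F^{2}+\delta\|\cdot\|^{2}$ is convex, nonnegative and degree-$2$ homogeneous, so $F_{\delta}$ is again a singular norm, and $F_{\delta}^{2}=\delta\|\cdot\|^{2}+r_{\delta}$ with $r_{\delta}:=(1-\delta)F^{2}$ convex, nonnegative and degree-$2$ homogeneous; moreover $F_{\delta}\to F$ uniformly on $S$ as $\delta\to0$. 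The reason for passing to $F_{\delta}$ is that the summand $\delta\|\cdot\|^{2}$ will survive the smoothing below as a positive-definite quadratic form.

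Next I would smooth $F_{\delta}^{2}$ by averaging over a small neighbourhood of the identity in $GL(\mathbf{V})$: choosing a relatively compact neighbourhood $U$ of the identity and a nonnegative $\psi\in C^{\infty}_{c}(U)$ of total Haar mass $1$, put
\begin{equation*}
q_{\delta,U}(y):=\int_{GL(\mathbf{V})}F_{\delta}(A^{-1}y)^{2}\,\psi(A)\,dA .
\end{equation*}
Each integrand is the square of the singular norm $F_{\delta}\circ A^{-1}$, so $q_{\delta,U}$ is convex, nonnegative and degree-$2$ homogeneous. It is $C^{\infty}$ on $\mathbf{V}\setminus\{0\}$: near a fixed $y_{0}\neq0$ the map $A\mapsto A^{-1}y_{0}$ is a submersion onto a neighbourhood of $y_{0}$, so integrating $\psi$ over its fibres rewrites $q_{\delta,U}(y)=\int_{\mathbf{V}}F_{\delta}(z)^{2}\,\kappa(y,z)\,dz$ with $\kappa$ smooth and, for $y$ near $y_{0}$, supported in a fixed compact subset of $\mathbf{V}\setminus\{0\}$, so one may differentiate arbitrarily often under the integral sign. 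Splitting $F_{\delta}^{2}=\delta\|\cdot\|^{2}+r_{\delta}$ inside the integral gives
\begin{equation*}
q_{\delta,U}(y)=\delta\,y^{\top}Q\,y+s_{\delta,U}(y),\qquad Q:=\int_{GL(\mathbf{V})}A^{-\top}A^{-1}\,\psi(A)\,dA ,
\end{equation*}
where $Q$ is symmetric positive definite and $s_{\delta,U}(y):=\int_{GL(\mathbf{V})}r_{\delta}(A^{-1}y)\,\psi(A)\,dA$ is convex, degree-$2$ homogeneous and $C^{\infty}$ off the origin. Hence on $\mathbf{V}\setminus\{0\}$ the Hessian of $q_{\delta,U}$ is $\succeq2\delta Q\succ0$, so $\tilde F_{\delta,U}:=\sqrt{q_{\delta,U}}$ is a positive $C^{\infty}$, degree-$1$ homogeneous function whose square has positive definite Hessian: a smooth Minkowski norm.

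For convergence, as $U\downarrow\{I\}$ one has $A^{-1}y\to y$ uniformly on $S$ and $F_{\delta}^{2}$ is uniformly continuous on a compact annulus around $S$, so $q_{\delta,U}\to F_{\delta}^{2}$, hence $\tilde F_{\delta,U}\to F_{\delta}$, uniformly on $S$. Combining this with $F_{\delta}\to F$, a diagonal choice $F_{n}:=\tilde F_{\delta_{n},U_{n}}$ with $\delta_{n}\to0$ and $U_{n}$ shrinking fast enough yields $F_{n}\to F$ uniformly on $S$, i.e.\ in the local $C^{0}$-topology. (When $\dim\mathbf{V}=1$ the statement is trivial, every singular norm already being smooth off the origin; the argument above also runs verbatim with $GL(\mathbf{V})=\mathbb{R}^{\times}$.)

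The main obstacle is the smoothing step. Plain mollification on $\mathbf{V}$ destroys homogeneity, and re-homogenizing a mollified function destroys convexity, whereas averaging over a neighbourhood of the identity in $GL(\mathbf{V})$ is a continuous convex combination of linear pull-backs of $F_{\delta}$ and so preserves both homogeneity and convexity. The two points that then need care — that this average is genuinely $C^{\infty}$ off the origin (the submersion / fibre-integration rewrite) and that its Hessian is \emph{strictly} positive rather than merely semidefinite — are handled by the fibre-integration identity and by the preliminary passage to $F_{\delta}$ respectively, the latter because it is the surviving term $\delta\,y^{\top}Qy$ that keeps the Hessian uniformly positive.
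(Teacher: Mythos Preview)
Your proof is correct and takes a genuinely different route from the paper's. The paper proceeds in three steps: first it convolves $F$ with a standard Euclidean mollifier to get a smooth convex $F_{1;\epsilon}$ (losing homogeneity), then it re-homogenises by taking the gauge $F_{2;\epsilon}$ of the smooth convex level set $F_{1;\epsilon}^{-1}(1)$ (recovering a smooth, homogeneous, but only \emph{semi}-strongly convex function), and finally it sets $F_\epsilon=\sqrt{F_{2;\epsilon}^2+\epsilon|x|^2}$ to force the Hessian to be strictly positive. You instead add the Euclidean term \emph{first}, forming $F_\delta^2=(1-\delta)F^2+\delta\|\cdot\|^2$, and then smooth by averaging $F_\delta^2$ over a small neighbourhood of the identity in $GL(\mathbf{V})$; because this averaging is a convex combination of linear pull-backs, it preserves both degree-$2$ homogeneity and convexity in one stroke, and the surviving quadratic term $\delta\,y^\top Q y$ guarantees strong convexity directly. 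Your approach avoids the re-homogenisation step (and the attendant checks that the level set is a smooth sphere with nonvanishing radial derivative), at the price of the slightly less elementary smoothing device and the fibre-integration argument needed to establish $C^\infty$-regularity of the $GL$-average away from the origin. Both proofs ultimately rely on the same mechanism---perturbing by a Euclidean quadratic---to pass from mere convexity to the strong convexity demanded of a Minkowski norm.
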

\begin{proof}
Denote $|\cdot|$ and $d\mathrm{vol}_x$ the standard Euclidean norm and volume form on $\mathbf{V}$ respectively.
We can find a family of smooth non-negative functions $\psi_\epsilon(x)$ on $\mathbb{R}^n$,
with the parameter $\epsilon>0$,
such that each $\psi_\epsilon(x)$ is supported in $\mathcal{B}_\epsilon=
\{x\in\mathbf{V}\mbox{ with }|x|\leq\epsilon\}$, and
$\int_{x\in\mathbf{V}}\psi_\epsilon(x)dx=1$. the convolution between $F$ and each $\psi_\epsilon$ defines a family of smooth functions
$$F_{1;\epsilon}(x)=
\int_{y\in\mathbf{V}}F(x-y)\psi_\epsilon(y)d\mathrm{vol}_y$$
on $\mathbb{R}^n$. When the positive parameter $\epsilon$ approaches 0,
$F_{1;\epsilon}$ is locally $C^0$-convergent to $F$.
Because $F$ is convex, i.e. for any $x_1,x_2\in\mathbf{V}$ and $\lambda\in[0,1]$,
$F(\lambda x_1+(1-\lambda)x_2)\leq\lambda F(x_1)+(1-\lambda)F(x_2)$,
we also have
\begin{eqnarray*}
F_{1;\epsilon}(\lambda x_1+(1-\lambda)x_2)&=&
\int_{y\in\mathbf{V}}F(\lambda x_1+(1-\lambda)x_2-y)\psi_\epsilon(y)d\mathrm{vol}_y\\
&\leq&\int_{y\in\mathbf{V}}(\lambda F(x_1-y)
+(1-\lambda)F(x_2-y))\psi_\epsilon(y)d\mathrm{vol}_y\\
&=&\lambda F_{1;\epsilon}(x_1)+(1-\lambda) F_{1;\epsilon}(x_2).
\end{eqnarray*}
So for each $\epsilon>0$, $F_{1;\epsilon}$ is convex as well.

Now the problem is $F_{1;\epsilon}$ is not positively homogeneous in general. Suppose the positive $\epsilon$ is sufficiently closed to 0, then
the derivative of $F_{1;\epsilon}$ in the radius direction is non-vanishing everywhere in the region
$\{x\in\mathbb{R}^n\mbox{ with }1/2\leq |x|\leq 2\}$. The pre-image $\mathcal{S}_\epsilon=F_{1;\epsilon}^{-1}(1)$ is a smooth $n-1$-dimensional sphere surrounding the origin. Because $F_{1;\epsilon}$ is convex, $\mathcal{S}_\epsilon$ bounds a convex region in $\mathbf{V}$.
We define the second family of perturbation functions $F_{2;\epsilon}$ on $\mathbf{V}$, such that each $F_{2;\epsilon}$ is positively homogeneous
of degree one, and $F_{2;\epsilon}(x)=1$ iff $x\in\mathcal{S}_\epsilon$.
Then $F_{2;\epsilon}$ is a positive smooth function on
$\mathbb{R}^n\backslash\{0\}$. The convexity of $\mathcal{S}_\epsilon$ implies the Hessian $(g_{ij}(y))=(\frac12[F_{2;\epsilon}^2(y)]_{y^iy^j})$ is semi positive definite at least.

At last we define $F_{\epsilon}=\sqrt{F_{2;\epsilon}^2+\epsilon |x|^2}$,
then $F_{\epsilon}$ satisfies all the conditions for smooth Minkowski norms, when the positive parameter $\epsilon$ is sufficiently close to 0.
When $\epsilon$ approaches 0, $F_\epsilon$ uniformly converges to $F$ in
each compact subset in $\mathbf{V}$, i.e. in the local $C^0$-topology.
\end{proof}

We will also need the following simple technical facts.

\begin{lemma}\label{lemma-3}
Assume $G$ is a connected quasi-compact Lie group which acts transitively on the smooth coset space $G/H$. Let $F_n$ be a sequence of $G$-$\delta$-homogeneous Finsler
metrics on $G/H$ induced by submersion and the bi-invariant singular metrics $\bar{F}$ respectively. Denote $\tilde{F}_n$ the Chebyshev metric defined $F_n$ respectively. Then we have the following.
\begin{description}
\item{\rm (1)} If $\bar{F}_n$ converges to a singular metric $\bar{F}$ on $G$ in the local $C^0$-topology, then $F_n$ converges
    to a singular metric $F$ on $G/H$ in the local $C^0$-topology.
    Further more, $\bar{F}$ is bi-invariant, $F$ is $G$-homogeneous,
    and the canonical projection $\pi:(G,\bar{F})\rightarrow (G/H,F)$ is
    a submersion.
\item{\rm (2)} If $F_n$ converges to a singular metric $F$ on $G/H$
in the local $C^0$-topology, then $F$ can be induced by its Chebyshev
metric on $G$ and submersion. If $F$ is Finsler metric, then it must
be a $G$-$\delta$-homogeneous Finsler metric.
\end{description}
\end{lemma}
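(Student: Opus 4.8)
The plan is to translate both assertions into statements about (singular) norms on the Lie algebra. Write $\mathfrak{g}=\mathfrak{h}+\mathfrak{m}$ for an $\mathrm{Ad}(H)$-invariant (indeed, since $G$ is quasi-compact, $\mathrm{Ad}(G)$-invariant) decomposition, identify $\mathfrak{m}$ with $T_o(G/H)$, so that the tangent map of $\pi$ at $e$ is $\mathrm{pr}_\mathfrak{m}:\mathfrak{g}\to\mathfrak{m}$ with kernel $\mathfrak{h}$. Two facts are used throughout. First, a surjective linear map $\pi$ between (singular) norm spaces is a submersion precisely when the target norm is recovered by the infimum formula $F_2(u)=\inf\{F_1(w):\pi(w)=u\}$; combined with invariance (bi-invariance upstairs, $G$-homogeneity downstairs) a submersion at $e$ propagates to a submersion $\pi:G\to G/H$ everywhere, exactly as in Lemma 3.1 of \cite{XD2016}. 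Second, quasi-compactness of $G$ makes $\mathrm{Ad}(G)$ relatively compact in $GL(\mathfrak{g})$ (only the compact semisimple factor of $\mathfrak{g}$ acts), so the maximum in the Chebyshev formula (\ref{0002}) is attained over an effectively compact set; and, as in Theorem \ref{main-thm-1}, assuming $G$ acts effectively, the Chebyshev norm is positive away from the origin (its zero set would be an $\mathrm{Ad}(G)$-invariant subspace of $\mathfrak{h}$, hence an ideal of $\mathfrak{g}$ inside $\mathfrak{h}$, hence trivial), so it is a genuine singular norm.

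For part (1): the bi-invariant singular metrics $\bar F_n$ are $\mathrm{Ad}(G)$-invariant singular norms on $\mathfrak{g}$, and the submersion hypothesis gives $F_n(u)=\inf_{w\in u+\mathfrak{h}}\bar F_n(w)$. The first step is a uniform lower bound: since $\bar F_n\to\bar F$ uniformly on the Euclidean unit sphere and $\bar F$ is a singular norm, there is $c>0$ with $\bar F_n(w)\ge c|w|$ for all large $n$. Hence for $u$ in a fixed compact set the minimiser $w_n(u)$ of $\bar F_n$ on $u+\mathfrak{h}$ satisfies $|w_n(u)|\le c^{-1}\bar F_n(w_n(u))\le c^{-1}\bar F_n(u)$, which is bounded uniformly in $n$ and $u$ (for large $n$, $\bar F_n(u)$ is within $1$ of the continuous $\bar F$ on a compact set); the minimiser for the limiting functional $\bar F$ is confined the same way, so both infima are attained inside one fixed Euclidean ball $B$, on which $\bar F_n\to\bar F$ uniformly. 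Therefore $|F_n(u)-F(u)|\le\sup_{B}|\bar F_n-\bar F|\to 0$ uniformly on compacta, where $F(u):=\inf_{w\in u+\mathfrak{h}}\bar F(w)$; this is the desired local $C^0$-convergence. That $\bar F$ is bi-invariant (limit of $\mathrm{Ad}(G)$-invariant norms), that $F$ is $G$-homogeneous ($\mathrm{Ad}(H)$-invariant in the limit), that $F$ is a genuine singular norm (for $u\ne0$ the attained infimum is $\bar F(w)>0$ at some $w\ne0$), and that $F(u)=\inf_{w\in u+\mathfrak{h}}\bar F(w)$ exhibits $\mathrm{pr}_\mathfrak{m}$, hence $\pi$, as a submersion — these are then immediate.

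For part (2): first prove $\tilde F_n\to\tilde F$ locally uniformly, where $\tilde F$ is defined from the limit $F$ by (\ref{0002}). Indeed, for $w$ in a compact set the points $\mathrm{pr}_\mathfrak{m}(\mathrm{Ad}(g)w)$, $g\in G$, remain in a fixed compact $K'\subset\mathfrak{m}$ on which $F_n\to F$ uniformly, so $|\tilde F_n(w)-\tilde F(w)|\le\sup_{K'}|F_n-F|\to 0$. By the Chebyshev construction of Subsection 3.3, each $(G/H,F_n)$ being $G$-$\delta$-homogeneous, $F_n$ is induced by $\tilde F_n$ and submersion, i.e. $F_n(u)=\inf_{w\in u+\mathfrak{h}}\tilde F_n(w)$. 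Letting $n\to\infty$ and reusing the interchange-of-limit-and-infimum argument of part (1), now for $\tilde F_n\to\tilde F$, the left side tends to $F(u)$ and the right to $\inf_{w\in u+\mathfrak{h}}\tilde F(w)$; hence $F(u)=\inf_{w\in u+\mathfrak{h}}\tilde F(w)$, i.e. $\mathrm{pr}_\mathfrak{m}:(\mathfrak{g},\tilde F)\to(\mathfrak{m},F)$, and therefore $\pi:(G,\tilde F)\to(G/H,F)$, is a submersion, so $F$ is induced by its Chebyshev metric and submersion. If moreover $F$ is a smooth Finsler metric, then for $u\in\mathfrak{m}$ a horizontal lift $\tilde u\in\mathfrak{g}$ satisfies $\mathrm{pr}_\mathfrak{m}(\tilde u)=u$ and $\tilde F(\tilde u)=F(u)$, whence for all $g\in G$, $F(\mathrm{pr}_\mathfrak{m}(\mathrm{Ad}(g)\tilde u))\le\tilde F(\mathrm{Ad}(g)\tilde u)=\tilde F(\tilde u)=F(u)$, so $\tilde u$ is a $\delta$-vector for $u$; by the first condition of Theorem \ref{thm-1}, $F$ is $G$-$\delta$-homogeneous.

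The step needing the most care is the interchange of the limit with $\inf$ and with $\max$. The $\max$ half is painless, since quasi-compactness of $G$ turns it into a maximum over a genuinely compact set, on which uniform convergence of $F_n$ transfers. The $\inf$ half is the real point: the fibre direction $\mathfrak{h}$ is non-compact, so one must confine the minimisers to a fixed compact region uniformly in $n$, and this rests entirely on the uniform coercivity $\bar F_n\ge c|\cdot|$ (resp. $\tilde F_n\ge c|\cdot|$) extracted from the convergence together with the fact that the limiting norm is bona fide. Everything else — closedness of bi-invariance, $\mathrm{Ad}(H)$-invariance, positivity, convexity, and the infimum characterisation of submersions under local $C^0$-limits — is routine.
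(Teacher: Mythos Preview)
Your proof is correct and follows the same overall line as the paper: reduce to (singular) norms on $\mathfrak{g}$ and $\mathfrak{m}$, show in part (2) that the Chebyshev norms $\tilde F_n$ converge to $\tilde F$, and check that the submersion relation passes to the local $C^0$-limit. The one technical difference is in how the interchange of limit and infimum over the non-compact fibre $u+\mathfrak{h}$ is justified. The paper argues via nested inclusions of sublevel sets: from $|\bar F_n-\bar F|<\epsilon$ on $\{\bar F\le 1\}$ it deduces $\{\bar F_n\le\tfrac{1}{1+\epsilon}\}\subset\{\bar F\le 1\}\subset\{\bar F_n\le\tfrac{1}{1-\epsilon}\}$ in $\mathfrak{g}$, projects by $\mathrm{pr}_\mathfrak{m}$, and reads off $|F_n-F|\le\epsilon$ on $\{F=1\}$. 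You instead extract a uniform coercivity bound $\bar F_n\ge c|\cdot|$ (resp.\ $\tilde F_n\ge c|\cdot|$) from the convergence, confine all minimisers to a fixed Euclidean ball, and use uniform convergence there. Both are standard; your version is more explicit about why the fibrewise infimum is well-behaved, while the paper's indicatrix argument is more concise. For the final clause of (2) the paper simply refers back to its earlier observation that horizontal lifts for the Chebyshev metric are $\delta$-vectors; you spell out the same computation directly.
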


\begin{proof}
(1)
Assume $\bar{F}_n$ converges to the singular metric $\bar{F}$ on
$G$ in the local $C^0$-topology. Then $\bar{F}$ is bi-invariant because each
$\tilde{F}_n$ is. Similar arguments as for Lemma 3.1 in \cite{XD2016}, $\bar{F}$ defines a $G$-homogeneous singular metric $F$ on $G/H$. To show $F_n$
locally $C^0$-converges to $F$, we only need to prove it in $\mathfrak{m}$, viewing all $\tilde{F}_n$, $F_n$, $\tilde{F}$, and $F$ as singular norms. For each sufficiently small $\epsilon>0$, we can find $N>0$, whenever $n>N$, $|\bar{F}_n(\tilde{w})-\bar{F}(\tilde{w})|<\epsilon$
when $\bar{F}(\tilde{w})\leq 1$. It implies the following inclusions among subsets of $\mathfrak{g}$ for each $n>N$,
\begin{eqnarray*}
\{\tilde{w}\in\mathfrak{g}\mbox{ with }\bar{F}_n(\tilde{w})\leq\frac{1}{1+\epsilon}\}&\subset&
\{\tilde{w}\in\mathfrak{g}\mbox{ with }\bar{F}(\tilde{w})\leq 1\}\\
&\subset&
\{\tilde{w}\in\mathfrak{g}\mbox{ with }\bar{F}_n(\tilde{w})\leq\frac{1}{1-\epsilon}\}.
\end{eqnarray*}
Because of submersion, $\mathrm{pr}_\mathfrak{m}$ maps these subsets onto
the following subsets in $\mathfrak{m}$ with corresponding inclusions,
\begin{eqnarray*}
\{w\in\mathfrak{m}\mbox{ with }F_n(w)\leq\frac{1}{1+\epsilon}\}&\subset&
\{w\in\mathfrak{m}\mbox{ with }F(w)\leq 1\}
\\ &\subset&\{w\in\mathfrak{m}\mbox{ with }F_n(w)\leq\frac{1}{1-\epsilon}\}.
\end{eqnarray*}
So we also have $|F_n(w)-F(w)|\leq \epsilon$ when $n>N$ and $F(w)=1$. So as
singular norms on $\mathfrak{m}$, $F_n$ converges to $F$ in the local $C^0$-topology. Using the $G$-translations, this convergence is still valid
when all $F_n$ and $F$ are viewed as singular metrics.

(2) Viewed as singular norms on $\mathfrak{m}$, $F_n$ converges to $F$
in the local $C^0$-topology. Then the Chebyshev norms $\tilde{F}_n$
also converge to the Chebyshev norm $\tilde{F}$ defined by $F$. Because
$\mathrm{pr}_\mathfrak{m}:(\mathfrak{g},\tilde{F}_n)\rightarrow (\mathfrak{m},F_n)$ is a
submersion for each $n$, i.e.
$$\mathrm{pr}_\mathfrak{m}(\{\tilde{u}\in\mathfrak{g}\mbox{ with }\tilde{F}_n(\tilde{u})\leq 1\})=\{u\in\mathfrak{m}\mbox{ with }
F_n(u)\leq 1\},$$
passing to their limits, we also have
$$\mathrm{pr}_\mathfrak{m}(\{\tilde{u}\in\mathfrak{g}\mbox{ with }\tilde{F}(\tilde{u})\leq 1\})=\{u\in\mathfrak{m}\mbox{ with }
F(u)\leq 1\}.$$
That proves $F$ can be induced by its Chebyshev metric on $G$ and submersion.
The last assertion is an obvious observation we have made.
\end{proof}

Now we prove Theorem \ref{main-thm-1}.

We only need to prove two assertions for this theorem. The first one is that,
inside the space of all smooth Finsler metrics on $G/H$,
the subset of all $G$-$\delta$-homogeneous Finsler metrics
is closed in the local $C^0$-topology. This is already done by (2) of
Lemma \ref{lemma-3}.

The second one is that
for any $G$-$\delta$-homogeneous Finsler metric $F$, we can find a
sequence of $G$-normal homogeneous metrics $F_n$, such that $F_n$
converges to $F$ in the $C^0$-topology.

now we prove the second claim. Consider a
$G$-$\delta$-homogeneous Finsler metric
$F$ on $G/H$, which is defined by submersion from the Chebyshev metric $\tilde{F}$ on $G$. By Lemma \ref{appro-lemma},
we can find a sequence
of Minkowski norms $\{\tilde{F}'_n\}$ on $G$ which converge to
$\tilde{F}$ in the $C^0$-topology. Average each $\tilde{F}'_n$ with all $\mathrm{Ad}(G)$-actions as following,
$$\tilde{F}_n(w)=\sqrt{\frac{\int_{g\in G}{\tilde{F}'_n}(w)^2 d\mathrm{vol}_g}{\int_{g\in G}d\mathrm{vol}_g}},$$
where $\mathrm{vol}_g$ is a bi-invariant volume form of $G$,
we get a sequence of $\mathrm{Ad}(G)$-invariant Minkowski norms
$\{\tilde{F}_n\}$ on $\mathfrak{g}$, such that the
bi-invariant Finsler metrics $\tilde{F}_n$ converge to the bi-invariant singular metric $\tilde{F}$. Each bi-invariant smooth Finsler metric $\tilde{F}_n$ on $G$ defines a $G$-normal homogeneous metric $F_n$. By (1) of Lemma \ref{lemma-3}, $F_n$ converges to $F$ in the local $C^0$-topology.

This ends the proof of Theorem \ref{main-thm-1}.

\section{The positive curvature problem for $\delta$-homogeneous Finsler spaces}
Let $(G/H,F)$ be a $G$-$\delta$-homogeneous Finsler space, where $G$ is
a connected quasi-compact Lie group which acts effectively on $G/H$. We
choose an $\mathrm{Ad}(G)$-invariant inner product, and denote
$\mathfrak{g}=\mathfrak{h}+\mathfrak{m}$ the corresponding orthogonal
decomposition. Then according to
\cite{XD2016}, a subalgebra $\mathfrak{s}\subset\mathfrak{g}$ is called
a {\it flat splitting subalgebra} (FSS in short), when it satisfies the following
conditions:
\begin{description}
\item{\rm (1)} $\mathfrak{s}$ is the intersection of a family of
Cartan subalgebras of $\mathfrak{g}$.
\item{\rm (2)} $\mathfrak{s}=\mathfrak{s}\cap\mathfrak{h}+
\mathfrak{s}\cap\mathfrak{m}$.
\item{\rm (3)} $\dim\mathfrak{s}\cap\mathfrak{m}\geq 2$.
\end{description}

Because of Condition (1) for a FSS, $\mathfrak{s}$ generates
a closed connected abelian subgroup $\exp\mathfrak{s}$ of $G$. The orbit $\exp\mathfrak{s}\cdot o=\exp(\mathfrak{s}\cap\mathfrak{m})\cdot o$
can be viewed as the connected abelian Lie group $S=\exp(\mathfrak{s}\cap\mathfrak{m})$. The subspace metric $F|_{S\cdot o}$
is a left invariant Finsler metric on $S$, which is obviously flat.
The crucial observation here is that $(S\cdot o,F|_{S\cdot o})$
is totally geodesic.

\begin{lemma} \label{lemma-4}
Keep all above notations and assumptions. Then  $(S\cdot o,F|_{S\cdot o})$ is a totally geodesic subspace of $(G/H,F)$.
\end{lemma}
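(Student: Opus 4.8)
The plan is to establish the totally geodesic property in its geometric form: every geodesic of $(S\cdot o,F|_{S\cdot o})$ is a geodesic of $(G/H,F)$; this is equivalent to the coordinate condition in the definition. Since $S=\exp(\mathfrak s\cap\mathfrak m)$ is abelian and acts on $S\cdot o$ transitively by restrictions of isometries of $(G/H,F)$, it suffices to treat geodesics issuing from $o$. Because $\exp$ is a homomorphism on the abelian group $S$ and left translations of $S$ act isometrically, the map $v\mapsto\exp(v)\cdot o$ identifies a neighbourhood of $0$ in the Minkowski space $(\mathfrak s\cap\mathfrak m,\,F|_{\mathfrak s\cap\mathfrak m})$ with a neighbourhood of $o$ in $(S\cdot o,F|_{S\cdot o})$; hence the geodesics through $o$ are exactly the curves $c_u(t)=\exp(tu)\cdot o$, $u\in\mathfrak s\cap\mathfrak m$. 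So the lemma reduces to showing each $c_u$ is a geodesic of $(G/H,F)$.

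For this I would reuse the device from the proof of Theorem~\ref{non-nega-thm}. If $\tilde u\in\mathfrak g$ is a $\delta$-vector for $u$ (so $\mathrm{pr}_{\mathfrak m}\tilde u=u$), then by Theorem~\ref{thm-1} it generates a $\delta(o)$-Killing field whose $F$-length is maximal at $o$, and by Lemma 3.1 in \cite{DX2014} its integral curve through $o$, namely $t\mapsto\exp(t\tilde u)\cdot o$, is a geodesic of $(G/H,F)$. It therefore suffices to find a $\delta$-vector for $u$ lying in $\mathfrak s$: writing such a $\tilde u$ as $u+\eta$ with $\eta\in\mathfrak s\cap\mathfrak h$, commutativity of $\mathfrak s$ gives $\exp(t\tilde u)=\exp(tu)\exp(t\eta)$ with $\exp(t\eta)\in H$, so $\exp(t\tilde u)\cdot o=c_u(t)$, and we are done.

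The heart of the matter is producing a $\delta$-vector for $u\in\mathfrak s\cap\mathfrak m$ inside $\mathfrak s$, and this is where Condition (1) of a flat splitting subalgebra — that $\mathfrak s=\bigcap_i\mathfrak t_i$ is an intersection of Cartan subalgebras — is used decisively. Recall from Theorem~\ref{thm-1} and the Chebyshev norm $\tilde F$ of (\ref{0002}) that the $\delta$-vectors for $u$ are precisely the minimizers of $\tilde F$ on the affine subspace $u+\mathfrak h$, and that $\delta$-homogeneity forces this minimum to equal $F(u)$. Now $\tilde F$ is an $\mathrm{Ad}(G)$-invariant convex function; hence for any Cartan subalgebra $\mathfrak t$ the orthogonal projection onto $\mathfrak t$ does not increase $\tilde F$ (by Kostant's convexity theorem $\mathrm{pr}_{\mathfrak t}w$ lies in the convex hull of $\mathrm{Ad}(G)w\cap\mathfrak t$, a finite set on which $\tilde F$ takes the constant value $\tilde F(w)$). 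Since $u\in\mathfrak s\subseteq\mathfrak t_i$, this gives $\tilde F(u+\mathrm{pr}_{\mathfrak t_i}\xi)=\tilde F(\mathrm{pr}_{\mathfrak t_i}(u+\xi))\le\tilde F(u+\xi)$ for every $\xi\in\mathfrak h$ and every $i$. Iterating these projections through the finitely many $\mathfrak t_i$ that cut out $\mathfrak s$ and applying von Neumann's alternating-projection theorem, the composite projections converge to $\mathrm{pr}_{\mathfrak s}$, and continuity of $\tilde F$ yields $\tilde F(u+\mathrm{pr}_{\mathfrak s}\xi)\le\tilde F(u+\xi)$. Finally, since $\xi\in\mathfrak h\perp\mathfrak s\cap\mathfrak m$ and $\mathfrak s=(\mathfrak s\cap\mathfrak m)\oplus(\mathfrak s\cap\mathfrak h)$ by Condition (2), we have $\mathrm{pr}_{\mathfrak s}\xi\in\mathfrak s\cap\mathfrak h$; taking $\xi$ with $u+\xi$ a $\delta$-vector, the element $\tilde u:=u+\mathrm{pr}_{\mathfrak s}\xi\in\mathfrak s$ satisfies $\tilde F(\tilde u)\le F(u)$, hence $\tilde F(\tilde u)=F(u)$, so $\tilde u$ is the desired $\delta$-vector in $\mathfrak s$.

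I expect the only genuinely delicate point to be this last step: combining the Kostant-convexity estimate with the alternating-projection limit, and checking that the merely quasi-compact hypothesis causes no trouble — the abelian factor of $\mathfrak g$ lies in every Cartan subalgebra and is fixed by every $\mathrm{pr}_{\mathfrak t_i}$, so all estimates restrict harmlessly to the compact semisimple part, on which $\tilde F$ is $\mathrm{Ad}$-invariant and convex. The reduction to the curves $c_u$ and the appeal to Lemma 3.1 in \cite{DX2014} are routine given the earlier results.
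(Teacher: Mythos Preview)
Your argument is correct, and the final reduction --- find a $\delta$-vector for $u\in\mathfrak s\cap\mathfrak m$ that lies in $\mathfrak s$, then invoke the Killing-field geodesic criterion --- is exactly the paper's endgame as well. The difference lies in how that $\delta$-vector in $\mathfrak s$ is produced. The paper uses Theorem~\ref{main-thm-1} to approximate $F$ by $G$-normal homogeneous metrics $F_n$; for each $F_n$ there is a smooth bi-invariant norm $\bar F_n$, and Lemma~3.4 of \cite{XD2016} supplies a horizontal lift $\tilde u_n\in\mathfrak s$ with $\bar F_n(\tilde u_n)=\tilde F_n(\tilde u_n)=F_n(u)$. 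A subsequential limit in $\mathfrak s$ then gives the desired $\delta$-vector for $F$. Your route is direct: you work with the (singular) Chebyshev norm $\tilde F$ itself, use Kostant convexity to show that orthogonal projection to each Cartan subalgebra $\mathfrak t_i$ does not increase $\tilde F$, and pass to $\mathrm{pr}_{\mathfrak s}$ via alternating projections; projecting any $\delta$-vector for $u$ onto $\mathfrak s$ then lands in $u+(\mathfrak s\cap\mathfrak h)$ without raising $\tilde F$. Your approach is self-contained and avoids both the approximation theorem and the external Lemma~3.4, at the price of importing Kostant's theorem and a (routine, finite-dimensional) alternating-projection limit; the paper's approach, by contrast, showcases the approximation machinery that is the paper's main theme and recycles the smooth-case lemma rather than reproving a convexity estimate. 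Your handling of the quasi-compact case is also fine: the central factor sits inside every $\mathfrak t_i$ and is fixed by each projection, so the Kostant step lives entirely on the compact semisimple summand.
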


\begin{proof} By Theorem \ref{main-thm-1}, we assume $F$ is the limit
of a sequence $F_n$ of $G$-normal homogeneous metrics on $G/H$ in the
local $C^0$-topology. Denote $\tilde{F}$ and $\tilde{F}_n$ their Chebyshev norms on $\mathfrak{g}$ respectively. For each $n$, we have another bi-invariant Minkowski norm $\bar{F}_n$ defining the $G$-normal homogeneity of $F_n$. Consider any nonzero vector $u\in\mathfrak{s}\cap\mathfrak{m}$. By Lemma 3.4
in \cite{XD2016}, there exists a unique vector $\tilde{u}_n\in\mathfrak{s}$
such that $\mathrm{pr}_\mathfrak{m}(\tilde{u}_n)=u$ and
$\bar{F}_n(\tilde{u}_n)=\tilde{F}_n(\tilde{u}_n)=1$. Because $\tilde{F}_n$ converges to $\tilde{F}$ in the local $C^0$-topology, $\tilde{F}(\tilde{u}_n)$ is bounded. We can find a convergent subsequence
converging to some $\tilde{u}\in\mathfrak{s}$ with
$\mathrm{pr}_\mathfrak{m}(\tilde{u})=u$ and
$\tilde{F}(\tilde{u})=1$. So $\tilde{u}$ defines a $\delta(o)$-Killing vector
field $X$ of $(G/H,F)$ for the tangent vector $u\in\mathfrak{m}$. The integration curve of $X$ passing $o$ is then
a geodesic of $(G/H,F)$. On the other hand, it coincides with $\exp (t\tilde{u})\cdot o=\exp(tu)\cdot o$ which is a geodesic of $(S\cdot o,F|_{S\cdot o})$. By left $S$-translations, this proves $(S\cdot o,F|_{S\cdot o})$ is totally geodesic.
\end{proof}

The observation that a FSS is a totally geodesic flat subspace for a normal
homogeneous Finsler space is the key technique that we reduce the classification for positively curved normal homogeneous Finsler spaces to
a totally algebraic problem.

To be precise, in \cite{XD2016}, Theorem 3.3, the preparation lemmas, and the case by case discussion from Section 4 to Section 6, proves the following.

If $G/H$ does not admit positively curved $G$-normal homogeneous
Riemannian metrics, i.e. there exists a linearly independent commuting pair
of vectors in $\mathfrak{m}$, then we can find a closed subgroup $K$ in $G$
with $\mathfrak{h}\subset\mathfrak{k}\subset\mathfrak{g}$, with the corresponding orthogonal decomposition $\mathfrak{g}=\mathfrak{k}+\mathfrak{p}$, such that there exists a FSS
for $G/K$. Then we see $G/H$ does not admit positively curved $G$-normal homogeneous
Finsler metrics either. In particular, when $G/H$ is even dimensional,
we can choose $K=H$. In most cases, the FSS can be found among Cartan
subalgebras, which will be called a {\it flat splitting Cartan subalgebra} or
FSCS in short.

Because
of Theorem \ref{main-thm-1} and Lemma \ref{lemma-4}, this theory can be
applied to prove Theorem \ref{main-thm-2}, which implies the classification for positively curved $\delta$-homogeneous Finsler spaces coincides with that for positively curved normal homogeneous Riemannian spaces.

{\bf Proof of Theorem \ref{main-thm-2}.}
Let $(G/H,F)$ be a positively curved $G$-$\delta$-homogeneous Finsler spaces.
Assume conversely $G/H$ does not admit a positively curved $G$-normal homogeneous Riemannian metric, then we can find a closed subgroup $K$ of $G$,
such that there exists a FSS for $G/K$. By Theorem \ref{theorem-3-2} and Proposition \ref{prop-2},
$G/K$ also admits a positively curved $G$-$\delta$-homogeneous Finsler metric.
But by Lemma \ref{lemma-4} implies the FSS provides a totally geodesic
flat subspace of dimension bigger than 1. This is the contradiction.

Lemma \ref{lemma-4} can tell us more when we turn to the (FP) condition \cite{XD2016-2}.
\begin{definition} A Finsler space $(M,F)$ is called flag-wise positively curved or satisfying the (FP) condition, if for any $x\in M$, any tangent
plane $\mathbf{P}\subset T_xM$, we can find a $y\in\mathbf{P}$ such that
the flag curvature $K^F(x,y,\mathbf{P})>0$.
\end{definition}

The (FP) condition itself is very weak \cite{X-2016}. But its combination
with the non-negatively curved condition seems much stronger, very like
the positively curved condition. We guess a flag-wise positively curved and non-negatively curved homogeneous Finsler space $G/H$ must be compact.
Further more, when $G$ is compact, we guess $\mathrm{rank}G\leq\mathrm{rank}H+1$.

By Theorem \ref{non-nega-thm}, a flag-wise positively curved $\delta$-homogeneous Finsler space belongs to this special category.
We can prove
\begin{theorem} Let $(M,F)$ be a connected flag-wise positively curved $\delta$-homogeneous Finsler space which dimension is bigger than 1, then $M$ is compact. If $M=G/H$
where $G$ is a compact connected Lie group acting effectively on $M$,
then $\mathrm{rank}G\leq\mathrm{rank}H+1$.
\end{theorem}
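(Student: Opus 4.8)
The plan is to reduce the theorem to the corresponding statements for positively curved homogeneous Finsler spaces by using the approximation theorem (Theorem \ref{main-thm-1}) together with the totally geodesic FSS technique (Lemma \ref{lemma-4}). For the compactness assertion, I would first invoke Theorem \ref{thm-3}: replace the (a priori noncompact) isometry group by a closed connected quasi-compact subgroup $G\subset I_0(M,F)$ with $(M,F)=(G/H,F)$ being $G$-$\delta$-homogeneous, and $F$ induced by the Chebyshev metric on $G$ and submersion. Then it suffices to show $G$ is compact (a quasi-compact $G$ with compact quotient $G/H$ would give compact $M$ once $H$ is known to contain the non-compact, i.e.\ Euclidean, directions; more directly, one shows $\mathfrak{g}$ has no center component acting non-trivially). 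Suppose not: then $\mathfrak{g}=\mathfrak{g}_1+\mathfrak{g}_2$ with $\mathfrak{g}_1$ compact semisimple and $\mathfrak{g}_2$ a non-trivial central subalgebra (by Theorem \ref{quasi-compactness-thm}, $\mathfrak{g}$ is already compact in the Lie-algebra sense, so the only obstruction to compactness of $G$ is a non-compact central torus, i.e.\ an $\mathbb{R}^k$ factor with $k\geq 1$ that is not killed inside $H$).

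\textbf{Ruling out non-compact directions.} If $\mathfrak{z}\subset\mathfrak{g}$ is a central subalgebra with $\mathrm{pr}_\mathfrak{m}(\mathfrak{z})\neq 0$, pick a nonzero $u\in\mathrm{pr}_\mathfrak{m}(\mathfrak{z})\cap\mathfrak{m}$. Since $\mathfrak{z}$ is central, the one-parameter subgroup $\exp(t\tilde{u})$ (for $\tilde u$ a $\delta$-vector lift of $u$) acts on $M$ by an isometry flow whose Killing field $X$ has $\langle X,X\rangle^F$ achieving a maximum at $o$; centrality forces the displacement function $F(\mathrm{pr}_\mathfrak{m}(\mathrm{Ad}(\exp t v)\tilde u))$ to be constant in all central directions, and one extracts a totally geodesic flat subspace of dimension $\geq 2$ (an abelian orbit containing $u$ and a second independent commuting direction, which always exists once there is any central direction in $\mathfrak{m}$ together with the geodesic generated by $\tilde u$ — more carefully, as in the proof of Theorem \ref{quasi-compactness-thm}, one produces a flat splitting structure). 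This contradicts the (FP) condition via Lemma \ref{lemma-4} and Proposition 2.5: a flat totally geodesic subspace of dimension $\geq 2$ has a plane of zero flag curvature, and by the totally geodesic property that plane has zero flag curvature in $(M,F)$ as well. Hence no such central direction survives, $G$ is compact, and $M=G/H$ is compact by Bonnet–Myers-type reasoning or directly because a compact $G$ has compact orbits.

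\textbf{The rank inequality.} Now assume $M=G/H$ with $G$ compact connected acting effectively. Suppose for contradiction $\mathrm{rank}\,G\geq\mathrm{rank}\,H+2$. Fix a maximal torus $T_H\subset H$ and extend it to a maximal torus $T\subset G$; then $\dim\mathfrak{t}\cap\mathfrak{m}=\mathrm{rank}\,G-\mathrm{rank}\,H\geq 2$, where $\mathfrak{t}=\mathrm{Lie}(T)$ and I use that $T$ may be chosen with $\mathfrak{t}\cap\mathfrak{h}=\mathrm{Lie}(T_H)$. A Cartan subalgebra is its own intersection of Cartan subalgebras, so $\mathfrak{t}$ itself (or a suitable FSCS refining it, as in \cite{XD2016}) is a flat splitting subalgebra: it satisfies $\mathfrak{t}=\mathfrak{t}\cap\mathfrak{h}+\mathfrak{t}\cap\mathfrak{m}$ with $\dim\mathfrak{t}\cap\mathfrak{m}\geq 2$. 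By Lemma \ref{lemma-4}, the orbit $S\cdot o$ with $S=\exp(\mathfrak{t}\cap\mathfrak{m})$ is a totally geodesic flat subspace of $(G/H,F)$ of dimension $\geq 2$, so it contains a tangent plane $\mathbf{P}$ with $K^{F}(o,y,\mathbf{P})=K^{F|_{S\cdot o}}(o,y,\mathbf{P})=0$ for every $y\in\mathbf{P}$, contradicting the (FP) condition. Therefore $\mathrm{rank}\,G\leq\mathrm{rank}\,H+1$.

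\textbf{The main obstacle} I anticipate is the compactness half, specifically verifying that a surviving non-compact central direction genuinely produces a \emph{two}-dimensional totally geodesic flat — a single geodesic line is not enough to contradict (FP). One must combine the central geodesic direction with a second commuting direction in $\mathfrak{m}$ (using that $\mathfrak{m}$ together with a central $\mathbb{R}$ either already contains such a pair, or else $\dim M=1$, which is excluded). The bookkeeping showing this always yields an FSS — and that the $\delta(o)$-Killing field constructed on the flat orbit really integrates to a geodesic so that Lemma \ref{lemma-4} applies verbatim — is where the argument needs care; the rank inequality, by contrast, is a clean application of Lemma \ref{lemma-4} to a maximal torus once the compact homogeneous presentation is in hand.
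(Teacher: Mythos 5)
Your rank-inequality argument is essentially the paper's: extend a Cartan subalgebra of $\mathfrak{h}$ to one of $\mathfrak{g}$, observe it is a flat splitting Cartan subalgebra with $\dim(\mathfrak{t}\cap\mathfrak{m})\geq 2$ when $\mathrm{rank}\,G\geq\mathrm{rank}\,H+2$, and contradict (FP) via Lemma \ref{lemma-4}. The compactness half, however, is where you have a genuine gap, and you correctly sensed it. Your plan is to take a central direction $v$ surviving in $\mathfrak{m}$ together with ``a second independent commuting direction'' $u\in\mathfrak{m}$ and claim this yields a two-dimensional totally geodesic flat. But Lemma \ref{lemma-4} applies only to a flat splitting subalgebra, and $\mathrm{span}(u,v)$, while abelian, need not be one: condition (1) (that $\mathfrak{s}$ be an intersection of Cartan subalgebras, which is what guarantees $\exp\mathfrak{s}$ is a closed subgroup) and condition (2) (the splitting $\mathfrak{s}=\mathfrak{s}\cap\mathfrak{h}+\mathfrak{s}\cap\mathfrak{m}$) can both fail for the natural enlargements of $\mathrm{span}(u,v)$ — e.g.\ for regular $u$ the intersection of all Cartan subalgebras containing $u$ and $v$ is the full centralizer of $u$, which in general does not split along $\mathfrak{h}+\mathfrak{m}$. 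You never produce the FSS, and saying ``as in the proof of Theorem \ref{quasi-compactness-thm}, one produces a flat splitting structure'' does not fill this in: that proof rules out non-compactness of $\mathfrak{g}$ as a Lie algebra via a Ricci-curvature argument on a solvable factor, not via an FSS, and it does not address a surviving central $\mathbb{R}^k$.

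The paper closes exactly this gap by reversing your order of proof. It first establishes $\mathrm{rank}\,G\leq\mathrm{rank}\,H+1$ for the quasi-compact group $G=G_1\times\mathbb{R}^k$ supplied by Theorem \ref{quasi-compactness-thm} (your Cartan-extension argument works verbatim there, so you should run it before, not after, assuming compactness). If $M$ were non-compact, the inequality forces $k=1$ and $\mathrm{rank}\,G_1=\mathrm{rank}\,H$; the equal-rank condition makes every root plane of $\mathfrak{g}_1$ relative to a Cartan subalgebra $\mathfrak{t}\subset\mathfrak{h}$ lie entirely in $\mathfrak{h}$ or in $\mathfrak{m}$, and since $\dim M>1$ some root plane $\mathfrak{g}_{\pm\alpha}$ lies in $\mathfrak{m}$. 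Then $\mathbb{R}u+\ker\alpha+\mathbb{R}v$ (with $u\in\mathfrak{g}_{\pm\alpha}$, $v$ in the $\mathbb{R}$-factor) is an honest Cartan subalgebra of $\mathfrak{g}$ satisfying the splitting condition with $\dim(\mathfrak{s}\cap\mathfrak{m})\geq 2$, and Lemma \ref{lemma-4} gives the contradiction. Without this (or an equivalent explicit construction of the FSS), your compactness argument does not go through.
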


\begin{proof} By Theorem \ref{quasi-compactness-thm},
we can assume the flag-wise positively curved $\delta$-homogeneous Finsler space $(M,F)$ is $G$-$\delta$-homogeneous, where $G$ is a connected, simply
connected and quasi-compact Lie group which acts transitively and effectively on $M$. We can present $G=G_1\times \mathbb{R}^k$ where
$G_1$ is a maximal compact subgroup of $G$, $M=G/H$ where $H$ is a closed
subgroup of $G_1$. First we prove then $\mathrm{rank}G\leq\mathrm{rank}H+1$.
If this is not true, any Cartan subalgebra of $\mathfrak{h}$ can be enlarged to a F.S.C.S in $\mathfrak{g}$. By Lemma \ref{lemma-4}, $(M,F)$ can not satisfy the (FP) condition because of the totally geodesic flat subspace.
This argument proves the second statement in the theorem. Now we prove the first statement, i.e. the compactness of $M$. Assume conversely $M$ is not
compact, then $k=1$ and $\mathrm{rank}G_1=\mathrm{rank}H$. Let $\mathfrak{t}$
be a Cartan subalgebra of $\mathfrak{h}$, with respect to which $\mathfrak{g}_1=\mathrm{Lie}(G_1)$
and $\mathfrak{h}$ can be decomposed as the sum of $\mathfrak{t}$ and root
planes $\mathfrak{g}_{\pm\alpha}$. Because $\mathrm{rank}G_1=\mathrm{rank}H$, Each root plane $\mathfrak{g}_{\pm\alpha}$ is contained in either $\mathfrak{h}$ or $\mathfrak{m}$. Because $\dim G/H>1$, we can find a root
plane $\mathfrak{g}_{\pm\alpha}$ in $\mathfrak{m}$. Let $u$ be any nonzero vector in $\mathfrak{g}_{\pm\alpha}$, $v$ any nonzero vector in the $\mathbb{R}$-factor, and $\mathrm{ker}\alpha$ the codimension one subspace of
$\mathfrak{t}$ where $\alpha$ takes zero values. Then $u$, $v$ and $\mathrm{ker}\alpha\subset\mathfrak{t}$ span a FSCS, which will be a contradiction to the (FP) condition.
\end{proof}

As the end, we remark that,
FSS can be viewed as an obstacle for normal or $\delta$-homogeneous Finsler metric either positively curved or satisfying the (FP) condition. For the positively curved condition, this obstacle can be passed downward by submersion, i.e. when $G/H$ admits a positively curved $G$-normal or $G$-$\delta$-homogeneous Finsler metric, then for any closed subgroup $K\subset G$ containing $H$, the naturally induced $G$-normal or $G$-$\delta$-homogeneous Finsler metric is also positively curved. But this
observation is not true for the (FP) condition, i.e. this obstacle can not
be passed downward for the (FP) condition. When $\dim G/H$ is even, $H$ itself
can be taken as $K$, so the classification work in \cite{XD2016} proves
\begin{corollary} \label{last-cor}
The following conditions are equivalent for an even dimensional smooth coset space $G/H$:
\begin{description}
\item{\rm (1)} It admits positively curved $G$-normal homogeneous Finsler metrics.
\item{\rm (2)} It admits positively curved $G$-$\delta$-homogeneous Finsler metrics.
\item{\rm (3)} It admits flag-wise positively curved $G$-normal homogeneous Finsler metrics.
\item{\rm (4)} It admits flag-wise positively curved $G$-$\delta$-homogeneous Finsler metrics.
\end{description}
\end{corollary}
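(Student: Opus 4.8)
The plan is to reduce the corollary to a single implication. Four of the implications are formal: a $G$-normal homogeneous Finsler metric is automatically $G$-$\delta$-homogeneous, and a positively curved metric is automatically flag-wise positively curved, so $(1)\Rightarrow(2)$, $(1)\Rightarrow(3)$, $(2)\Rightarrow(4)$ and $(3)\Rightarrow(4)$ hold with no work, and the even-dimensionality is not used for them. Hence, once $(4)\Rightarrow(1)$ is established the diagram closes: $(1)\Rightarrow(2)\Rightarrow(4)\Rightarrow(1)$ gives $(1)\Leftrightarrow(2)$, and $(1)\Rightarrow(3)\Rightarrow(4)\Rightarrow(1)$ gives $(1)\Leftrightarrow(3)$, so all four conditions become equivalent. (As a sanity check, $(1)\Leftrightarrow(2)$ is also immediate from Theorem \ref{main-thm-2}, a $G$-normal homogeneous Riemannian metric being in particular a $G$-normal homogeneous Finsler metric.)

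To prove $(4)\Rightarrow(1)$ I would argue by contradiction. Assume $F$ is a flag-wise positively curved $G$-$\delta$-homogeneous Finsler metric on the even-dimensional coset space $G/H$, but $G/H$ admits no positively curved $G$-normal homogeneous Riemannian metric. Then $\mathfrak{m}$ contains a linearly independent commuting pair of vectors, and feeding this into the classification machinery of \cite{XD2016} (Theorem 3.3 there together with its preparatory lemmas and the case-by-case discussion), while using that $\dim G/H$ is even so that one may take $K=H$, produces a flat splitting subalgebra $\mathfrak{s}\subset\mathfrak{g}$ for $G/H$ itself, with $\dim(\mathfrak{s}\cap\mathfrak{m})\geq 2$.

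Now Lemma \ref{lemma-4} applies directly: the orbit $S\cdot o$ with $S=\exp(\mathfrak{s}\cap\mathfrak{m})$ is a totally geodesic subspace of $(G/H,F)$, and $F|_{S\cdot o}$ is a left-invariant Finsler metric on the abelian group $S$, hence flat, so $K^{F|_{S\cdot o}}\equiv 0$. Pick any two-dimensional subspace $\mathbf{P}\subset\mathfrak{s}\cap\mathfrak{m}=T_o(S\cdot o)$, which exists since $\dim(\mathfrak{s}\cap\mathfrak{m})\geq 2$. The proposition on flag curvature of totally geodesic subspaces then yields $K^F(o,y,\mathbf{P})=K^{F|_{S\cdot o}}(o,y,\mathbf{P})=0$ for every nonzero $y\in\mathbf{P}$, contradicting the (FP) condition at $o$ for the plane $\mathbf{P}$. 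Therefore $G/H$ does admit a positively curved $G$-normal homogeneous Riemannian metric, which gives (1) and closes the cycle.

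The only step that carries genuine weight is the appeal to the classification result of \cite{XD2016}: that, in the even-dimensional case, the non-existence of a positively curved $G$-normal homogeneous Riemannian metric forces a flat splitting subalgebra with $K=H$ inside $\mathfrak{g}$. The remaining pieces — the four formal implications and the flat totally geodesic subspace contradiction built on Lemma \ref{lemma-4} — are routine, and I anticipate no real obstacle beyond quoting that input precisely.
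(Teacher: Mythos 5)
Your proposal is correct and takes essentially the same route as the paper: the four formal implications plus the key step that, in the even-dimensional case, failure of (1) yields a flat splitting subalgebra for $G/H$ itself (taking $K=H$ in the classification machinery of \cite{XD2016}), whose orbit is a flat totally geodesic subspace by Lemma \ref{lemma-4}, contradicting the (FP) condition. This is exactly the argument the paper sketches in the paragraph preceding the corollary.
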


Notice when $\dim G/H$ is odd, Corollary \ref{last-cor} is not true. Theorem 1.2 in \cite{XD2016-2} provides many examples of smooth coset spaces admitting flag-wise positively curved and non-negatively curved homogeneous Finsler metrics. By Corollary 1.4 in \cite{XD2016-2}, most of them do not admit positively curved homogeneous Finsler metrics. In each example
$(G/H,F)$ in Theorem 1.2 of \cite{XD2016-2}, $F$ is defined by the navigation with respect to a normal homogeneous Riemannian metric $F'$ and a nonzero Killing vector field $V$ with $F'(V(\cdot))\equiv\mathrm{const}<1$. Then for any $x\in G/H$ and any
$F'$-unit tangent vector $u'\in T_x(G/H)$, we have a $\delta(x)$-Killing vector
field $X$ of $(G/H,F')$ from $\mathfrak{g}$, for the tangent vector $u'$. Then $X+V$ is
a $\delta(x)$-Killing vector field of $(G/H,F)$ from $\mathfrak{g}\oplus\mathbb{R}$, for the $F$-unit tangent vector $u=u'+V(x)$,
which can exhaust all the tangent directions.
So $F$ is $G\times S^1$-$\delta$-homogeneous.

{\bf Acknowledgement.} The authors would like to thank Shaoqiang Deng and Fuhai Zhu for helpful discussions.

\end{document}